\theoremstyle{plain}
\newtheorem{thm}{Theorem}[section]
\newtheorem{cor}[thm]{Corollary}
\newtheorem{lem}[thm]{Lemma}
\newtheorem{rem}[thm]{Remark}
\newtheorem{conj}[thm]{Conjecture}
\newtheorem{prob}[thm]{Problem}
\def\cal{\mathcal}
\def\bbb{\mathbb}
\def\op{\operatorname}
\renewcommand{\phi}{\varphi}
\newcommand{\N}{\bbb{N}}
\newcommand{\Z}{\bbb{Z}}
\newcommand{\Q}{\bbb{Q}}
\newcommand{\C}{\bbb{C}}
\begin{document}

\title[Formal inverse of the Prouhet-Thue-Morse sequence]{On formal inverse of the Prouhet-Thue-Morse sequence}
\author{Maciej Gawron and Maciej Ulas}

\keywords{Prouhet-Thue-Morse sequence, automatic sequences, regular sequences}
\subjclass[2010]{11B83, 11B85}
\thanks{The research of the authors was supported by the grant of the Polish National Science Centre no. UMO-2012/07/E/ST1/00185}

\begin{abstract}
Let $p$ be a prime number and consider a $p$-automatic sequence ${\bf u}=(u_{n})_{n\in\N}$ and its generating function $U(X)=\sum_{n=0}^{\infty}u_{n}X^{n}\in\mathbb{F}_{p}[[X]]$. Moreover, let us suppose that $u_{0}=0$ and $u_{1}\neq 0$ and consider the formal power series $V\in\mathbb{F}_{p}[[X]]$ which is a compositional inverse of $U(X)$, i.e., $U(V(X))=V(U(X))=X$. In this note we initiate the study of arithmetic properties of the sequence of coefficients of the power series $V(X)$. We are mainly interested in the case when $u_{n}=t_{n}$, where $t_{n}=s_{2}(n)\pmod{2}$ and ${\bf t}=(t_{n})_{n\in\N}$ is the Prouhet-Thue-Morse sequence defined on the two letter alphabet $\{0,1\}$. More precisely, we study the sequence ${\bf c}=(c_{n})_{n\in\N}$ which is the sequence of coefficients of the compositional inverse of the generating function of the sequence ${\bf t}$. This sequence is clearly 2-automatic. We describe the sequence ${\bf a}$ characterizing solutions of the equation $c_{n}=1$. In particular, we prove that the sequence ${\bf a}$ is 2-regular. We also prove that an increasing sequence characterizing solutions of the equation $c_{n}=0$ is not $k$-regular for any $k$. Moreover, we present a result concerning some density properties of a sequence related to ${\bf a}$.

\end{abstract}

\maketitle

\section{Introduction}\label{sec1}
Let $k\in\N_{\geq 2}$ and consider a $k$-automatic sequence ${\bf u}=(u_{n})_{n\in\N}$.  Let us recall that the sequence ${\bf u}$ is $k$-automatic if its $n$-th term is generated by a finite automaton with $n$ in base $k$ as the input. One can prove that this property is equivalent to the fact that the family of sequences (called $k$-kernel of ${\bf u}$)
$$
\cal{K}({\bf u}):=\{(u_{k^{a}n+b})_{n\in\N}:\;a\in\N,0\leq b\leq k^{a}-1\}
$$
is finite. The simplest $k$-automatic sequences are periodic sequences.

A famous 2-automatic sequence which is not periodic is the Prouhet-Thue-Morse sequence ${\bf t}=(t_{n})_{n\in\N}$ (PTM sequence for short). In order to define the sequence ${\bf t}$ (on alphabet $\{0,1\}$) let $n\in\N$ be written in base 2, i.e., $n=\sum_{i=0}^{k}\epsilon_{i}2^{i}$, where $\epsilon_{i}\in\{0,1\}$ for $i=0,1,\ldots,k$. Then we define the sum of digits function $s_{2}:\;\N\rightarrow \N$ as $s_{2}(n)=\sum_{i=0}^{k}\epsilon_{i}$. This function satisfies the obvious recurrence relations:
$$
s_{2}(0)=0, \quad s_{2}(2n)=s_{2}(n), \quad s_{2}(2n+1)=s_{2}(n)+1
$$
for $n\geq 0$. The sum of digits function allows us to define the PTM sequence ${\bf t}={(t_{n})}_{n\in\N}$, where $t_{n}=s_{2}(n)\pmod{2}$. We thus have
$$
t_{0}=0, \quad t_{2n}=t_{n}, \quad t_{2n+1}=1-t_{n}
$$
for $n\geq 0$. In particular, from the above relations we immediately deduce that the PTM sequence is indeed 2-automatic. This is clear due to the fact that its kernel contains exactly two sequences, i.e., $\cal{K}({\bf t})=\{{\bf t}, 1-{\bf t}\}$. %An automaton generating the PTM sequence has the following form:

The PTM sequence has many interesting properties and applications in combinatorics, algebra, number theory, topology and even physics (see for example \cite{AllSh1} and \cite{AllSh2}).

Now, if $k=p$ is a prime number then Christol theorem says that the sequence ${\bf u}$ (with terms in a finite field $\mathbb{F}_{p}$) is $p$-automatic if and only if the formal power series $U(X)=\sum_{n=0}^{\infty}u_{n}X^{n}$ is algebraic over $\mathbb{F}_{p}(X)$. In this context one can ask the following general

\begin{prob}\label{genprob}
Let ${\bf u}=(u_{n})_{n\in\N}$ be a $p$-automatic sequence and let $U(X)=\sum_{n=0}^{\infty}u_{n}X^{n}\in\mathbb{F}_{p}[[X]]$ be a formal power series related to the sequence ${\bf u}$. Let us suppose that there exists a formal power series $V(X)=\sum_{n=0}^{\infty}v_{n}X^{n}\in\mathbb{F}_{p}[[X]]$ which is compositional inverse to the series $U$, i.e., $U(V(X))=X$. What can be said about properties of the sequence ${\bf v}=(v_{n})_{n\in\N}$?
\end{prob}

It is well known that the formal power series $U(X)=\sum_{n=0}^{\infty}u_{n}X^{n}$ with coefficients in a commutative ring $R$ is invertible (in the sense of composition) if and only if $u_{0}=0$ and $u_{1}$ is an invertible element of $R$. Moreover, if $U(V(X))=X$ then $V(U(V(X)))=V(X)$ and thus $H(V(X))=0$, where $H(X)=V(U(X))-X$. Because $U$ is non-constant so is $V$ and thus $H\equiv 0$ which is equivalent with the equality $V(U(X))=X$. In particular, the above problem does not make sense for all $p$-automatic sequences. However, we observe that if the problem above is correctly stated then the sequence ${\bf v}$ is $p$-automatic which is an immediate consequence of Christol theorem \cite[Theorem 12.2.5]{AllSh2}. Indeed, if $H\in\mathbb{F}_{p}(X)[Y]$ is a non-zero polynomial with the root $U(X)$, i.e., $H(X,U(X))=0$, then we clearly have $H(V(X),X)=0$ and thus we get $p$-automaticity of the sequence of coefficients of $V$.

As we were unable to prove any general result for Problem \ref{genprob} we concentrate on this problem in case of ${\bf u}={\bf t}$, where ${\bf t}$ is the PTM sequence. To be more precise, let us consider the increasing sequence $(o_{n})_{n\in\N}$
satisfying the equality
$$
\cal{O}:=\{m:\;t_{m}=1\}=\{o_{n}:\; n\in\N_{+}\},
$$
i.e., $o_{n}$ is the $n$-th element of the set $\cal{O}$ of so called ``odious'' numbers. A positive integer is an odious number if the number of 1's in its binary expansion is odd. This is equivalent to the identity $t_{n}=s_{2}(n)\pmod{2}=1$. It is interesting that the sequence $(o_{n})_{n\in\N}$ satisfies $o_{1}=1, o_{2}=2, o_{3}=4$ and for $n\geq 1$ we have the following recurrence relations
$$
\begin{array}{lll}
o_{4n}    & = & o_{n}-3o_{n+1}+3o_{2n+1} \\
o_{4n+1}  & = & -2o_{n+1}+3o_{2n+1} \\
o_{4n+2}  & = & -o_{n}-9o_{n+1}-o_{2n}+8o_{2n+1}\\
o_{4n+3}  & = &-\frac{5}{3}o_{n}-11o_{n+1}-\frac{5}{3}o_{2n}+10o_{2n+1}.
\end{array}
$$
(One can prove that in fact $o_{n}=2n-1-t_{n-1}$ for $n\geq 1$). In particular, we deduce that the sequence $(o_{n})_{n\in\N}$ is 2-{\it regular} \cite{AllSha0, AllSha}. The concept of a $k$-regular sequence is a generalization of $k$-automatic sequences to the case of infinite alphabets. More precisely, we say that the sequence ${\bf u}=(u_{n})_{n\in\N}$ taking values in a $\Z$-module $R$ is a $k$-regular sequence if there exist a finite number of sequences over $R$, say $\{(r_{j,n})_{n\in\N}:\;j=1,2,\ldots, m\}$ such that for each integer $i\in\N$ and $b\in\{0,1,\ldots, k^{i}-1\}$ we have
$$
u_{k^{i}n+b}=\sum_{j=1}^{m}b_{j}r_{j,n}
$$
for some $b_{1},\ldots, b_{m}\in\Z$ and each $n\in\N$. In other words, the $\Z$-module $R$ is finitely generated. Let us also note that the integer sequence, say $(e_{n})_{n\in\N}$, enumerating the set
$$
\cal{E}:=\{m\in\N:\;t_{m}=0\}
$$
of ``evil'' numbers, satisfies the same recurrence relation as the sequence $(o_{n})_{n\in\N}$ (with different initial conditions of course). In particular this sequence is 2-regular too. Let us note that several examples of $q$-automatic sequences which is characteristic function of an $k$-regular increasing sequence of integers are given in \cite[p. 99-105]{Cat}.

2-regularity of the sequences related to the sets $\cal{O}$ and $\cal{E}$ is interesting. Indeed, this is a strong property due to the fact that for a general 2-automatic sequence ${\bf u}$ the sequence related to the set $\{m:\;u_{m}=1\}$ need not be $k$-regular for any $k$. Indeed, let us consider the characteristic sequence of powers of 2, i.e., the sequence ${\bf u}=(u_{n})_{n\in\N_{+}}$ satisfying $u_{1}=1$ and
$$
u_{2n}=u_{n}, \quad u_{2n+1}=0
$$
for $n\geq 1$. We then have an obvious equality $\{m:\;u_{m}=1\}=\{2^{n}:\;n\in\N\}$. It is also clear that the sequence $(2^{n})_{n\in \N}$ is not $k$-regular due to the fact that $k$-regular sequences grow polynomially fast (see \cite[Theorem 16.3.1]{AllSh2}).

Because $t_{0}=0, t_{1}=1$ we note that for the formal power series
$$
F(X)=X+X^2+X^4+X^6+X^7+\ldots=\sum_{n=1}^{\infty}t_{n}X^{n}\in\mathbb{F}_{2}[[X]]
$$
there is a formal power series $G\in\mathbb{F}_{2}[[X]]$ such that $F(G(X))=G(F(X))=X$, i.e., $G$ is a compositional inverse of $F$. We write
$$
G(X)=X+X^2+X^7+X^8+X^9+X^{10}+\ldots=\sum_{n=1}^{\infty}c_{n}X^{n}\in\mathbb{F}_{2}[[X]]
$$
and call the sequence ${\bf c}=(c_{n})_{n\in\N}$  {\it the inverse Prouhet-Thue-Morse sequence} (iPTM for short).

Let us describe the content of the paper in some details. In Section \ref{sec2} we get 2-automaticity of ${\bf c}$ and present some recurrence relations satisfied by this sequence. We also prove transcendence over $\C(X)$ of the power series $C(X)=\sum_{n=0}^{\infty}c_{n}X^{n}$, where we treat $C$ as an element of the ring of formal power series $\C[[X]]$. In Section \ref{sec3} we prove 2-regularity of an increasing sequence ${\bf a}=(a_{n})_{n\in\N}$ with the property
$$
c_{m}=1\Longleftrightarrow m=a_{n}\quad\mbox{for some}\quad n\in\N.
$$
Moreover, we prove that the set of fractions $\{\frac{a_{n}}{n^2}:\;n\in\N_{+}\}$ is dense (in the Euclidean topology) in the set $\left[\frac{1}{6},\frac{1}{2}\right]$. Section \ref{sec4} is devoted to the study of an increasing sequence ${\bf d}=(d_{n})_{n\in\N}$ satisfying the condition
$$
c_{m}=0\Longleftrightarrow m=d_{n}\quad\mbox{for some}\quad n\in\N.
$$
We prove that the sequence ${\bf d}$ is not $k$-regular for any $k\in\N_{\geq 2}$.

 Finally, in the last section we offer some problems and conjectures which may stimulate further research.

\section{Recurrence relation for the sequence ${\bf c}$ and transcendence of a related series}\label{sec2}

In this section we are interested in the coefficients of the series $G(X)=\sum_{n=1}^{\infty}c_{n}X^{n}$ which is the compositional inverse of the Prouhet-Thue-Morse generating series $F(X)$, i.e., we are interested in $G(X)$ which satisfies
\begin{equation*}
F(G(X))=G(F(X))=X
\end{equation*}
over $\mathbb{F}_{2}[[X]]$. Before we state our first result let us recall that the Prouhet-Thue-Morse power series $F(X)$ satisfies the algebraic equation \cite[p. 352]{AllSh2}
\begin{equation}\label{PTMeq}
(1+X)^{3}F(X)^2+(1+X^2)F(X)+X=0.
\end{equation}

We have the following result:

\begin{thm}
The series $G(X)=\sum_{n=1}^{\infty}c_{n}X^{n}$ satisfies each of the following polynomial equations
\begin{align*}
&X^2G(X)^3+X(1+X)G(X)^2+(X^2+1)G(X)+X(X+1)=0,\\
&X^3G(X)^4+(1+X)G(X)+X(X^2+1)=0.
\end{align*}
In particular, the sequence ${\bf c}=(c_{n})_{n\in\N}$ satisfies $c_{0}=0, c_{1}=c_{2}=1, c_{3}=0$ and for $n\geq 1$ we have the following recurrence relations
\begin{equation*}
\begin{array}{lcl}
  c_{4n}   & = & c_{4n-1} \\
  c_{4n+1} & = & c_{4n-1} \\
  c_{4n+2} & = & c_{4n-1} \\
  c_{4n+3} & = & (c_{4n-1}+c_{n})\pmod{2}.
\end{array}
\end{equation*}
\end{thm}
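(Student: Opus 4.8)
The plan is to derive the two algebraic equations for $G$ from the known equation \eqref{PTMeq} for $F$, and then to extract the recurrences by a routine coefficient comparison modulo $2$. Since $G = F^{-1}$, I would start from \eqref{PTMeq} and substitute $X \mapsto G(X)$; because $F(G(X)) = X$, equation \eqref{PTMeq} becomes
\[
(1+G(X))^{3}X^{2} + (1+G(X)^{2})X + G(X) = 0,
\]
working over $\F_{2}[[X]]$. Expanding $(1+G)^3 = 1 + G + G^2 + G^3$ over $\F_2$ (using the Frobenius identity $(1+G)^2 = 1+G^2$, so $(1+G)^3 = (1+G)(1+G^2) = 1+G+G^2+G^3$) and collecting terms in powers of $G$ gives exactly
\[
X^{2}G(X)^{3} + X(1+X)G(X)^{2} + (X^{2}+1)G(X) + X(X+1) = 0,
\]
which is the first claimed identity. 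For the second, I would eliminate the cubic term: square the first equation (Frobenius again turns the left side into a sum of squares of the individual monomials, since cross terms vanish mod $2$) to get a relation involving $G^6, G^4, G^2$, and combine it with $G$ times the first equation or otherwise reduce using the first identity to lower the degree. The cleanest route is probably to note that the first equation lets one write $X^2 G^3$ in terms of lower powers of $G$, then substitute into a suitable multiple; alternatively one verifies directly that $X^{3}G^{4} + (1+X)G + X(X^{2}+1)$ is divisible by the first polynomial in $\F_2[X][G]$, exhibiting the explicit cofactor. Either way this is a finite polynomial manipulation over $\F_2$.

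Given the second equation $X^{3}G(X)^{4} + (1+X)G(X) + X(X^{2}+1) = 0$, the recurrences follow by reading off coefficients. Over $\F_2$ we have $G(X)^4 = G(X^4) = \sum_{n\geq 1} c_n X^{4n}$ by Frobenius, so the equation reads
\[
\sum_{n\geq 1} c_n X^{4n+3} + (1+X)\sum_{n\geq 1} c_n X^{n} + X + X^{3} = 0
\]
in $\F_2[[X]]$. Now I would compare coefficients of $X^m$ for each residue of $m$ modulo $4$. The term $\sum c_n X^{4n+3}$ only contributes to exponents $\equiv 3 \pmod 4$; the term $(1+X)\sum c_n X^n = \sum (c_m + c_{m-1}) X^m$ contributes everywhere; and $X + X^3$ only affects the low-order initial conditions. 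For $m = 4n$, $4n+1$, $4n+2$ with $n$ large enough, the coefficient equation becomes $c_m + c_{m-1} = 0$, i.e. $c_{4n} = c_{4n-1}$, $c_{4n+1} = c_{4n} = c_{4n-1}$, $c_{4n+2} = c_{4n+1} = c_{4n-1}$. For $m = 4n+3$ we pick up the extra term $c_n$, giving $c_{4n+3} + c_{4n+2} + c_n = 0$, i.e. $c_{4n+3} = (c_{4n-1} + c_n) \bmod 2$. The base cases $c_0 = 0$, $c_1 = c_2 = 1$, $c_3 = 0$ are read off from the low-degree coefficients of $G$ (equivalently from the $X^1$ and $X^3$ inhomogeneous terms), and one checks the recurrences are consistent starting from $n=1$.

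The only genuinely delicate point is the elimination step producing the quartic equation from the cubic one — one must be careful that the Frobenius simplifications are applied correctly over $\F_2$ and that no sign or cross-term is dropped. Once that identity is in hand, the passage to the recurrences is entirely mechanical: it is just matching coefficients in four arithmetic progressions modulo $4$, with the quartic's sparsity (only a $G^4$, a $G$, and a constant-in-$G$ term) making the bookkeeping short. I would also double-check the stated initial segment of $G$, namely $G(X) = X + X^2 + X^7 + X^8 + X^9 + X^{10} + \cdots$, against the recurrence to confirm the indexing of the base cases.
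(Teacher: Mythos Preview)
Your proposal is correct and follows essentially the same route as the paper. The first equation is obtained exactly as you describe (substitute $X\mapsto G(X)$ in \eqref{PTMeq} and expand $(1+G)^3$ over $\F_2$); for the second, the paper's ``suitable multiple'' is precisely $G$ times the first equation, after which one replaces $G^3$ by $\tfrac{1+X}{X}G^2+\tfrac{X^2+1}{X^2}G+\tfrac{X+1}{X}$ and simplifies --- your suggested divisibility check also works, with explicit cofactor $XG+X+1$ in $\F_2[X][G]$ --- and the recurrence extraction via $G(X)^4=G(X^4)$ and coefficient comparison modulo $4$ is identical to the paper's.
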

\begin{proof}
We are working in the ring $\mathbb{F}_{2}[[X]]$. Rewriting the equation (\ref{PTMeq}) in terms of $X$ we have $F(X)^2X^3+F(X)(F(X)+1)X^2+(F(X)^2+1)X+F(X)(F(X)+1)=0$. We replace now $X$ by $G(X)$, where the power series $G$ is a formal inverse of $F$, i. e. satisfies $F(G(X))=G(F(X))=X$, and get the equation
\begin{equation}\label{firsteq}
X^2G(X)^3+X(1+X)G(X)^2+(X^2+1)G(X)+X(X+1)=0
\end{equation}
which is exactly the first equation from the statement of our theorem.

In order to get the second equation we multiply (\ref{firsteq}) by $G(X)$ and in the resulting equation we replace $G(X)^3$ by the expression $\frac{(1+X)}{X}G(X)^2+\frac{(X^2+1)}{X^2}G(X)+\frac{X+1}{X}$ (which is just solution of (\ref{firsteq}) with respect to $G(X)^3$). Performing now all necessary simplifications we get the second equation presented in the statement of our theorem.

Let us write $G(X)=\sum_{n=0}^{\infty}c_{n}X^{n}$. In order to get recurrence relations for $c_{n}$ we observe that the second equation for $G(X)$ can be rewritten as
$$
X^3G(X^4)+(1+X)G(X)+X(X^2+1)=0.
$$
We thus get that
\begin{equation*}
\sum_{n=0}^{\infty}c_{n}X^{4n+3}+(1+X)\sum_{n=0}^{\infty}c_{n}X^{n}+X^3+X=0
\end{equation*}
and a quick inspection of coefficients of the above power series gives the equalities $c_{0}=0, c_{1}=c_{2}=1$. Moreover, we have $c_{n}+c_{4n+3}+c_{4n+2}=0$ and $c_{4n+i}+c_{4n+i-1}=0$ for $i=0,1,2$ (here the equalities are taken $\pmod{2}$). These relations are clearly equivalent to those presented in the statement of our theorem.
\end{proof}

Based on the recurrence relations for ${\bf c}$ we can easily construct a finite automaton which generates the sequence ${\bf c}$. It takes the form:

\begin{center}
  \begin{tikzpicture}[->,>=stealth',shorten >=1pt,auto,node distance=2cm,
  thick,main node/.style={circle,draw,font=\sffamily\bfseries},
  in node/.style={circle,fill=gray!50,draw,font=\sffamily\bfseries}]

  \node[in node] (1) {0};
  \node[main node] (3) [below of=1] {1};
  \node[main node] (2) [left of=3] {0};
  \node[main node] (4) [right of=3] {0};
  \node[main node] (5) [below of=4] {0};

  \path[every node/.style={font=\sffamily\small}]
    (1) edge [right] node[above] {3} (2)
        edge [right] node[above] {1,2} (3)
        edge [right] node[above] {0} (4)
    (2) edge [loop left] node[below] {3} (2)
        edge [right] node[above] {1} (3)
        edge [bend right] node[below] {2,0} (5)
    (3) edge [loop below] node[below] {2,0} (3)
        edge [right] node[above] {1,3} (5)
    (4) edge [loop right] node[below] {0} (4)
        edge [right] node[above] {1,3} (5)
        edge [right] node[above] {2} (3)
    (5) edge [loop below] node[below] {0,1,2,3} (5);

\end{tikzpicture}

\end{center}
\begin{center}Figure 1. An automaton generating the $iPTM$ sequence \end{center}

\bigskip

In the sequel we will need different recurrence relations for the sequence ${\bf c}$. More precisely, we have:

\begin{lem}\label{alternative}
The sequence ${\bf c}=(c_{n})_{n\in\N_{+}}$ satisfies $c_{0}=0, c_{1}=c_{2}=1, c_{3}=0$. Moreover, we have the following recurrence relations
\begin{equation}\label{charofc}
\begin{array}{lccccccclc}
  c_{8n-1} & = & c_{8n}   & = & c_{8n+1} & = & c_{8n+2} & = & c_{2n-1}  &\text{for}\quad n\geq 1,\\
  c_{8n+3} & = & c_{8n+4} & = & c_{8n+5} & = & c_{8n+6} & = & 0         &\text{for}\quad n\geq 0. \\
\end{array}
\end{equation}
\end{lem}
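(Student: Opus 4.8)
The plan is to deduce \eqref{charofc} directly from the four recurrences for $\mathbf c$ established in the previous theorem, without returning to the functional equation. First I would record the elementary consequence that $\mathbf c$ is constant on each \emph{block} $\{4m-1,4m,4m+1,4m+2\}$: setting $b_m:=c_{4m-1}$ for $m\ge 1$, the first three recurrences give $c_{4m}=c_{4m+1}=c_{4m+2}=b_m$, while the fourth gives $c_{4m+3}\equiv b_m+c_m\pmod 2$. Since $c_{4m+3}=c_{4(m+1)-1}=b_{m+1}$, the auxiliary sequence $(b_m)_{m\ge 1}$ obeys the first-order recurrence $b_{m+1}\equiv b_m+c_m\pmod 2$ with $b_1=c_3=0$, whence $b_m\equiv\sum_{j=1}^{m-1}c_j\pmod 2$ for every $m\ge 1$.

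Next I would reduce both lines of \eqref{charofc} to a single statement. The indices $8n-1,8n,8n+1,8n+2$ form exactly the block with $m=2n$, and $8n+3,8n+4,8n+5,8n+6$ the block with $m=2n+1$; so by the previous paragraph $c_{8n-1}=c_{8n}=c_{8n+1}=c_{8n+2}=b_{2n}$ and $c_{8n+3}=c_{8n+4}=c_{8n+5}=c_{8n+6}=b_{2n+1}$. Thus \eqref{charofc} is equivalent to the pair of congruences $b_{2n+1}\equiv 0$ (for $n\ge 0$) and $b_{2n}\equiv c_{2n-1}$ (for $n\ge 1$), both taken modulo $2$. Both of these follow at once from the claim that every even-length partial sum of $\mathbf c$ vanishes, i.e. $\sum_{j=1}^{2\ell}c_j\equiv 0\pmod 2$ for all $\ell\ge 0$: indeed $b_{2n+1}=\sum_{j=1}^{2n}c_j$, while $b_{2n}=\sum_{j=1}^{2n-2}c_j+c_{2n-1}\equiv c_{2n-1}$.

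Finally I would establish the even-partial-sum claim by proving $c_{2\ell-1}\equiv c_{2\ell}\pmod 2$ for every $\ell\ge 1$, since then $\sum_{j=1}^{2\ell}c_j=\sum_{k=1}^{\ell}(c_{2k-1}+c_{2k})\equiv 0$. Here $\ell=1$ is the given value $c_1=c_2=1$; for $\ell=2k$ with $k\ge 1$ the equality $c_{4k-1}=c_{4k}$ is one of the basic recurrences; and for $\ell=2k+1$ with $k\ge 1$ it follows from $c_{4k+1}=c_{4k-1}=c_{4k+2}$, again by the basic recurrences. Chaining these facts together yields \eqref{charofc}.

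I do not expect a genuine obstacle here: the whole argument is elementary manipulation of the theorem's recurrences, and the nearest thing to a crux is the innocuous identity $c_{2\ell-1}\equiv c_{2\ell}\pmod 2$. The points that need attention are purely bookkeeping: tracking the index ranges ($n\ge 1$ versus $n\ge 0$, and treating $\ell=1$, equivalently $b_1$, as a base case) and remembering that all sums are formed in $\mathbb{F}_2$. One could instead try to iterate the functional equation $X^3G(X^4)+(1+X)G(X)+X(X^2+1)=0$ to extract congruences modulo $8$ for the coefficients, but this seems more cumbersome than the short reduction above.
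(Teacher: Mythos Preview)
Your proposal is correct and follows essentially the same approach as the paper: both arguments rest on the block-constancy coming from the first three recurrences together with the key identity $c_{2\ell-1}\equiv c_{2\ell}\pmod 2$, which the paper proves first and you prove last. The only difference is packaging: the paper runs two separate inductions on $n$ for $c_{8n-1}=c_{2n-1}$ and $c_{8n+3}=0$, whereas you telescope the recurrence $b_{m+1}\equiv b_m+c_m$ into the closed form $b_m\equiv\sum_{j=1}^{m-1}c_j$ and then read off both lines at once from the vanishing of the even partial sums --- a tidy reorganisation, but not a genuinely different route.
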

\begin{proof}
In the proof all equalities are taken $\pmod{2}$. We start with the simple property of the sequence ${\bf c}$ which states that $c_{2n-1}+c_{2n}\equiv 0\pmod{2}$. Indeed, if $n=2m$ then using the recurrence relation, we have
\begin{equation*}
c_{2n-1}+c_{2n}=c_{4m-1}+c_{4m}=2c_{4m-1}=0.
\end{equation*}
In the case $n=2m+1$ we have
\begin{equation*}
c_{2n-1}+c_{2n}=c_{4m+1}+c_{4m+2}=2c_{4m-1}=0.
\end{equation*}

We are ready to prove the result. Let $i\in\{0,1,2\}$. We then have $c_{8n+i}=c_{4(2n)+i}=c_{8n-1}$. In order to prove the equality $c_{8n-1}=c_{2n-1}$ we use induction on $n$. We have $c_{7}=c_{1}=1$. Let us suppose that $c_{8n-1}=c_{2n-1}$. We then have
\begin{align*}
c_{8(n+1)-1}&=c_{4(2n+1)+3}=c_{4(2n)+3}+c_{2n+1}=c_{4(2n)-1}+c_{2n}+c_{2n+1}\\
            &=c_{8n-1}+c_{2n}+c_{2n+1}=c_{2n-1}+c_{2n}+c_{2n+1}=c_{2n+1},
\end{align*}
where in the two last equalities we used induction hypothesis and the remark from the beginning of the proof respectively.

In order to prove the second part of relations for the sequence ${\bf c}$ we note that for $i\in\{0,1,2\}$ we have $c_{8n+4+i}=c_{4(2n+1)+i}=c_{8n+3}$. We proceed by induction on $n$. We have $c_{3}=0$. Let us suppose that $c_{8n+3}=0$ for some $n>0$. We then have
\begin{align*}
c_{8(n+1)+3}&=c_{4(2n+2)+3}=c_{4(2n)+7}+c_{2n+2}\\
            &=c_{2(n+1)-1}+c_{2n+2}=c_{2n+1}+c_{2n+2}=0.
\end{align*}
In the third equality we used the identity $c_{8n+7}=c_{8(n+1)-1}=c_{2n+1}$. The last equality follows from the remark given on the beginning of the proof.
\end{proof}

\begin{rem}{\rm  The sequence ${\bf c}$ take only the values 0 and 1 and thus can be considered as a sequence of real (complex) numbers. In this case we can easily write the functional equation satisfied by the formal power series
\begin{equation*}
C(X)=\sum_{n=0}^{\infty}c_{n}X^{n}\in\C[[X]]
\end{equation*}
and even prove that the series $C$ is transcendental over $\C(X)$.

More precisely, we have the following

\begin{thm}\label{functequation}
The formal power series $C(X)\in\C[[X]]$ satisfies the following functional equation
\begin{equation}\label{functionalequation}
C(X)=X(X+1)+\frac{X^3(X^4-1)}{(X-1)(X^4+1)}C(X^4).
\end{equation}
Moreover, the function $C(X)$ is transcendental over $\C(X)$.
\end{thm}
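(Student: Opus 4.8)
The plan is first to turn the recurrences of Lemma~\ref{alternative} into the functional equation (\ref{functionalequation}) by a bookkeeping argument, and then to deduce transcendence from the fact that a power series whose coefficients take finitely many values is either eventually periodic or has the unit circle as a natural boundary, whereas an algebraic function over $\C(X)$ has none. For the functional equation, I would use two consequences of Lemma~\ref{alternative} and its proof: since the $c_n$ lie in $\{0,1\}$ and $c_{2n-1}+c_{2n}\equiv 0\pmod 2$, one has $c_{2n}=c_{2n-1}$ as integers for $n\ge 1$; and $c_{8n+3}=c_{8n+4}=c_{8n+5}=c_{8n+6}=0$ while $c_{8n-1}=c_{8n}=c_{8n+1}=c_{8n+2}=c_{2n-1}$ (together with $c_0=0$, $c_1=c_2=1$). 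Split $C$ by the parity of the exponent, $C(X)=A(X^2)+XB(X^2)$ with $A(Y)=\sum_m c_{2m}Y^m$ and $B(Y)=\sum_m c_{2m+1}Y^m$; the relation $c_{2m}=c_{2m-1}$ gives $A(Y)=YB(Y)$, hence $C(X)=X(1+X)B(X^2)$, and so
\begin{equation*}
B(X^8)=\frac{C(X^4)}{X^4(1+X^4)}.
\end{equation*}
Now split $C$ by the residue of the exponent modulo $8$: the classes $3,4,5,6$ contribute nothing, the classes $0,1,2$ contribute $X+X^2+(1+X+X^2)X^8B(X^8)$, and the class $7$ contributes $X^7B(X^8)$, so that
\begin{equation*}
C(X)=X(1+X)+\bigl(X^7+X^8+X^9+X^{10}\bigr)B(X^8)=X(1+X)+\frac{X^7(X^4-1)}{X-1}\,B(X^8).
\end{equation*}
Substituting the formula for $B(X^8)$ yields (\ref{functionalequation}). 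This step is pure bookkeeping with formal power series and I expect no difficulty.

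\emph{Transcendence.} Since $c_n\in\{0,1\}$, the series $C$ converges on $|X|<1$, and iterating $c_{8n-1}=c_{2n-1}$ gives $c_{8\cdot 4^{k}-1}=c_7=c_1=1$ for every $k\ge 0$, so $C$ is not a polynomial and its radius of convergence equals $1$. By a classical theorem of Szeg\H{o} on power series whose coefficients take only finitely many values, either the sequence $\bf c$ is eventually periodic (equivalently, $C$ is rational) or the unit circle $|X|=1$ is a natural boundary for $C$. An algebraic function over $\C(X)$ can be analytically continued along every path in $\C$ avoiding a fixed finite set of points (the poles and branch points), hence admits no natural boundary; so it suffices to show that $\bf c$ is not eventually periodic.

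\emph{Non-periodicity.} For this I would pass to the auxiliary sequence $b_n:=c_{2n-1}$, $n\ge 1$, for which Lemma~\ref{alternative} gives the clean rules $b_{4m}=b_{4m+1}=b_m$ for $m\ge 1$ and $b_{4m+2}=b_{4m+3}=0$ for $m\ge 0$, with $b_1=1$ and $b_2=b_3=0$. From these rules a run of $L\ge 2$ consecutive zeros in $(b_n)$ starting at some position $m$ forces a run of at least $4L+2>L$ consecutive zeros starting near position $4m$; starting from $b_2=b_3=0$ and iterating, this produces runs of zeros of unbounded length in $(b_n)$. Since Lemma~\ref{alternative} also shows that the block $(c_{8n},c_{8n+1},\dots,c_{8n+7})$ equals $(b_n,b_n,b_n,0,0,0,0,b_{n+1})$, unbounded runs of zeros in $(b_n)$ yield unbounded runs of zeros in $\bf c$; as $\bf c$ contains infinitely many $1$'s, it cannot be eventually periodic. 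Therefore $C$ is not rational, the unit circle is a natural boundary for $C$, and $C$ is transcendental over $\C(X)$.

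\emph{Main obstacle.} The two places requiring genuine care are the careful tracking of residues modulo $8$ in the derivation of (\ref{functionalequation}), and, more substantially, the non-periodicity step, where one really needs \emph{unboundedly} long runs of zeros and the auxiliary sequence $(b_n)$ with its $b_{4m}$-rules is what makes the bootstrap transparent. An alternative to invoking Szeg\H{o}'s theorem would be to iterate (\ref{functionalequation}) and argue that $C$ has a dense set of singular points among the $4^{N}$-th roots of unity; but then one must control cancellations arising because the rational factor $\frac{X^3(X^4-1)}{(X-1)(X^4+1)}$ vanishes or has poles at several of those roots of unity, and that does not look easier than the route above.
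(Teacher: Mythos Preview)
Your argument is correct in both parts, and both parts take a different route from the paper.

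\textbf{Functional equation.} The paper writes $C(X)=X+X^2+\frac{X^4-1}{X-1}\sum_{n\ge 1}c_{2n-1}X^{8n-1}$, expresses the odd-index sum as $\tfrac12 X^3(C(X^4)-C(-X^4))$, derives the auxiliary identity $C(-X)=\frac{X-1}{X+1}C(X)$, and substitutes back. Your route is more direct: the observation $c_{2m}=c_{2m-1}$ (as integers, not merely mod~$2$) gives the closed form $C(X)=X(1+X)B(X^2)$ at once, so $B(X^8)=C(X^4)/(X^4(1+X^4))$ and the mod-$8$ split finishes without ever touching $C(-X)$. Both are valid bookkeeping; yours avoids the detour.

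\textbf{Transcendence.} The paper invokes Nishioka's dichotomy for Mahler-type equations (rational or transcendental) and then rules out rationality by writing $C=p/q$, clearing denominators in (\ref{functionalequation}), and comparing degrees; it also notes Fatou's theorem as an alternative black box. You instead use Szeg\H{o}'s theorem on power series with finitely many coefficient values (rational or natural boundary), observe that an algebraic function cannot have a natural boundary, and reduce to showing ${\bf c}$ is not eventually periodic, which you get from the bootstrap on zero-runs in $b_n=c_{2n-1}$ (a run of length $L$ at $m$ produces one of length $\ge 4L+2$ at $4m-2$). This is a genuinely different argument; the paper's non-rationality proof is purely algebraic degree-chasing, while yours exploits the combinatorics of ${\bf c}$ that the paper in fact develops later (Theorem~\ref{dif} and its corollary on arbitrarily long zero-blocks). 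Your approach has the advantage of reusing structural information about the sequence; the paper's has the advantage of being self-contained once the functional equation is in hand.

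One small point of care: the phrase ``starting near position $4m$'' is slightly loose; the run you actually obtain begins at $4m-2$ (using $b_{4(m-1)+2}=b_{4(m-1)+3}=0$), and it would be worth stating that explicitly so the inductive lower bound $L\mapsto 4L+2$ is transparent.
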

\begin{proof}
Based on Lemma \ref{alternative} we notice the following chain of equalities
\begin{align}\label{first}
C(X)&=\sum_{n=0}^{\infty}c_{n}X^{n}\\ \notag
    &=X+X^2+\sum_{i=-1}^{2}\sum_{n=1}^{\infty}c_{8n+i}X^{8n+i}\\ \notag
    &=X+X^2+\sum_{i=-1}^{2}\sum_{n=1}^{\infty}c_{2n-1}X^{8n+i}\\ \notag        &=X+X^2+\sum_{n=1}^{\infty}c_{2n-1}\left(\sum_{i=-1}^{2}X^{i+1}\right)X^{8n-1}\\ \notag
    &=X+X^2+\frac{X^4-1}{X-1}\sum_{n=1}^{\infty}c_{2n-1}X^{8n-1}\\ \notag
    &=X+X^2+\frac{1}{2}X^3\frac{X^4-1}{X-1}(C(X^4)-C(-X^4)),\notag
\end{align}
where in the last equality we used the simple equality
$$
\sum_{n=1}^{\infty}c_{2n-1}X^{8n-1}=\frac{1}{2}X^3(C(X^4)-C(-X^4)).
$$
Replacing $X$ by $-X$ in the above equation and adding these two equations we get after simple manipulations the equation
\begin{equation}\label{second}
C(-X)=\frac{X-1}{X+1}C(X).
\end{equation}
Replacing $X$ by $X^4$ in (\ref{second}) we get expression for $C(-X^4)$ in terms of $C(X^4)$. Putting obtained expression into the equation (\ref{first}) and performing necessary simplifications we get the functional equation from the statement of our theorem.

The transcendence of $C(X)$ will be a consequence of a well known result of Nishioka \cite{Nis}. This result says that the power series with rational coefficients which defines a holomorphic function, say $f$, in some neighborhood of zero  and satisfying functional equation of the form $f(X)=r_{1}(X)+r_{2}(X)f(X^m)$ for some $m\in\N_{\geq 2}$ and functions $r_{1}, r_{2}\in\C(X)$, is either rational or transcendental over $\C(X)$. It is clear that the power series $C(X)$ defines a holomorphic function in the domain $|X|<1$.

Let us assume that the function $C(X)$ is rational. This means that there are co-prime polynomials $p, q\in\C[X]$ such that $C(X)=p(X)/q(X)$. Moreover, we can assume that $q(X)\neq 0$. Putting this expression into the functional equation (\ref{functionalequation}) and multiplying both sides by $(X-1)(X^4+1)q(X)q(X^4)$ we get
\begin{equation*}
(X-1)(X^4+1)p(X)q(X^4)=X(X^2-1)(X^4+1)q(X)q(X^4)+X^3(X^4-1)p(X^4)q(X)
\end{equation*}
or equivalently
\begin{equation}\label{equation1}
(X-1)(X^4+1)[p(X)-X(X+1)q(X)]q(X^4)=X^3(X^4-1)p(X^4)q(X).
\end{equation}
In particular $q(X^4)|(X^4-1)p(X^4)q(X)$ due to the inequality $q(0)\neq 0$. Moreover, because the polynomials $p(X), q(X)$ are co-prime then $p(X^4)$ and $q(X^4)$ are co-prime too. We thus have $q(X^4)|(X^4-1)q(X)$. We thus have that $4\op{deg}q\leq 4+\op{deg}q$ and thus $\op{deg}q\leq 4/3<2$ which implies that $\op{deg}q\in\{0,1\}$. However, if $\op{deg}q=0$ then comparing the degrees of both sides in (\ref{equation1}) we get
$$
7+4\op{deg}p=5+\op{deg}(p(X)-X(X-1)q(X))\leq 5+\op{max}\{\op{deg}p,2\}.
$$
This inequality is clearly impossible.

Finally, if $\op{deg}q=1$ then using similar reasoning we get the equality
$$
5+\op{deg}(p(X)-X(X-1)q(X))=8+4\op{deg}p
$$
or equivalently $\op{deg}(p(X)-X(X-1)q(X))=3+4\op{deg}p$. If $\op{deg}p\geq 1$ we get a contradiction. Thus, the polynomial $p$ need to be of degree 0. Let us put $q(X)=aX+b$ and $p(X)=c$. Because $C(X)$ is non-constant we have $ac\neq 0$. However, a quick computation reveals that the leading term of the difference of both sides of the equation (\ref{equation1}) is $ac$ and need to be zero and we get a contradiction.

\end{proof}}
\end{rem}

\begin{rem}
{\rm In the above proof instead of Nishioka result we could use the classical result of Fatou: {\it if a power series $\sum_{n=0}^{\infty}a_{n}z^{n}$ with integer coefficients converges inside the unit disk, then it is either rational or transcendental over $\Q(X)$} \cite{Fat}.}

 \end{rem}

It is clear that if we make a reduction $\pmod{2}$ then the functional equation for $C$ reduces to the equation satisfied by $G$ over $\mathbb{F}_{2}(X)$. Let us also note that using the functional equation for the series $C$ one can deduce an alternative recurrence relation for the sequence ${\bf c}$. Indeed, let us write
$$
\frac{X^{3}(X^4-1)}{(X-1)(X^4+1)}=\sum_{n=0}^{\infty}p_{n}X^{n},
$$
i.e.,
$$
p_{n}=\begin{cases}

\begin{array}{lll}
1 &  & \mbox{if}\quad n=0\quad\mbox{or}\quad n\equiv -1,0,1,2\pmod{8} \\
0 &  & \mbox{if}\quad n\equiv 3,4,5,6\pmod{8}
\end{array}
\end{cases}.
$$
%Next, we observe that $C(x^{4})=\sum_{n=0}^{\infty}r_{n}x^{n}$, where
%$$
%r_{n}=\begin{cases}
%\begin{array}{lll}
%0 &  & \mbox{if}\quad n\not\equiv 0\pmod{4} \\
%c_{\frac{n}{4}} &  & \mbox{if}\quad n\equiv0\pmod{4}.
%\end{array}
%\end{cases}
%$$
Then, comparing the coefficients of both sides of the functional equation (\ref{functionalequation}) we get
$$
c_{n}=\sum_{k=0}^{\lfloor\frac{n}{4}\rfloor}p_{n-4k}c_{k},
$$
for $n\geq 3$.

\section{Characteristic sequence of 1's in the sequence {\bf c}}\label{sec3}

The aim of this section is to give characterization of an increasing sequence ${\bf a}=(a_{n})_{n\in\N}$ satisfying the equality
\begin{equation*}
\cal{A}:=\{m\in\N:\;c_{m}=1\}=\{a_{n}:\;n\in\N\}.
\end{equation*}

We start with characterization of the elements of $\cal{A}$ in terms of their expansions in base 4.

\begin{thm}\label{0characterization}
The set $\cal{A}$ consists of these integers $n > 0$ such that all their $2^{2k}$-th binary digits are 0 for $k\geq 1$, or equivalently expansion of $n+1$ in base $4$ consists only of digits $0$ and $2$, except for the last digit which can be arbitrary.
\end{thm}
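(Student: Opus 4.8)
The plan is to prove Theorem \ref{0characterization} by decoding the recurrence relations of Lemma \ref{alternative} into a statement about base-$4$ digits of $m+1$. The key observation is that the recurrence in \eqref{charofc} groups indices into blocks of $8$ consecutive integers, and within each such block the values of ${\bf c}$ are either all zero (on the residues $8n+3,\dots,8n+6$) or equal to $c_{2n-1}$ (on the residues $8n-1,8n,8n+1,8n+2$). I would first rephrase this as a statement about $m$: writing $m$ in the relevant range, $c_m = 0$ whenever $m \equiv 3,4,5,6 \pmod 8$, and $c_m = c_{2n-1}$ whenever $m \in \{8n-1,8n,8n+1,8n+2\}$, i.e. whenever $\lfloor (m+1)/8 \rfloor = n$ and $m+1 \equiv 0,1,2,3 \pmod 8$. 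The cleanest bookkeeping device is to track $m+1$ rather than $m$: the "evil" block $\{8n+3,\dots,8n+6\}$ corresponds to $m+1 \equiv 4,5,6,7 \pmod 8$, and the "reducible" block corresponds to $m+1 \equiv 0,1,2,3 \pmod 8$, with $c_m$ then equal to $c_{2n-1}$ where $n = \lfloor(m+1)/8\rfloor$, and $2n-1$ satisfies $(2n-1)+1 = 2n = 2\lfloor(m+1)/8\rfloor = \lfloor(m+1)/4\rfloor$ when $m+1 \equiv 0,1 \pmod 8$ (one must check the two sub-parities carefully here).

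Having set this up, I would argue by strong induction on $m$. Given $m$ with $m+1 = \sum_j \delta_j 4^j$ in base $4$, the lowest base-$4$ digit $\delta_0 \in \{0,1,2,3\}$ records $m+1 \bmod 4$, and the next digit $\delta_1$ records $m+1 \bmod 8$ jointly with $\delta_0$. The dichotomy above says: if $\delta_1 \in \{2,3\}$ (equivalently $m+1 \bmod 8 \in \{4,5,6,7\}$), then $c_m = 0$; if $\delta_1 \in \{0,1\}$, then $c_m = c_{m'}$ where $m'+1 = \lfloor (m+1)/4\rfloor$, i.e. $m'+1$ is obtained from $m+1$ by deleting its lowest base-$4$ digit $\delta_0$. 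So reading $m+1$ from the bottom in base $4$: $c_m = 1$ forces every digit except the very lowest ($\delta_0$, arbitrary) to lie in $\{0,1\}$ after we strip the previous digit — but wait, the reduction replaces $n$ by $2n-1$, so the digit that matters at each step is $\delta_1$ of the \emph{current} number, which becomes $\delta_0$-after-stripping only after the shift; I would carefully verify that stripping the lowest base-$4$ digit of $m+1$ and the operation $n \mapsto 2n-1$ are compatible, so that the condition propagated is precisely "$\delta_1 \in \{0,1\}$" at the top level translating into "all higher digits (beyond position $0$) of $m+1$ belong to $\{0,2\}$". The parity subtlety — $\{0,1\}$ versus $\{0,2\}$ — is resolved by noticing $2n-1$ is odd, hence $(2n-1)+1 = 2n$ is even, so its lowest base-$4$ digit is in $\{0,2\}$, which is exactly why the allowed non-leading digits are $0$ and $2$ rather than $0$ and $1$.

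Concretely, the induction step is: the base cases $c_0=0$, $c_1=c_2=1$, $c_3=0$ match ($1+1=2$, $2+1=3$: single base-$4$ digit, both in $\cal A$; $0+1=1$, $3+1=10_4$ has leading digit $1 \notin \{0,2\}$, so $0,3 \notin \cal A$). For $m \geq 4$: if $m+1 \bmod 8 \in \{4,5,6,7\}$ then $c_m = 0$ and correspondingly the second base-$4$ digit $\delta_1$ of $m+1$ is $2$ or $3$; since $m+1 \geq 5$ has at least two base-$4$ digits, $\delta_1$ is not the leading... no — $\delta_1$ could be the leading digit. If $\delta_1 = 2$ is the leading digit that's allowed by the theorem's condition, yet $c_m = 0$?! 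I would need to recheck: $m+1 = 2\cdot4 + \delta_0$ with $\delta_0 \in\{0,1,2,3\}$ gives $m+1 \in \{8,9,10,11\}$, $m \in \{7,8,9,10\}$, and indeed $c_7=c_8=c_9=c_{10}=1$ from the stated expansion of $G$ — so here $c_m = 1$, not $0$; the case split must be that $m+1 \bmod 8 \in \{4,5,6,7\}$ means $\delta_1 \in \{1\}$ with... let me recompute: $m+1 \equiv 4 \pmod 8$ with $m+1 = 4$ gives $\delta_1\delta_0 = 10_4$; $m+1=12$ gives $30_4$. So the "evil" residues $m+1 \bmod 8 \in\{4,5,6,7\}$ correspond to the pair $(\delta_1 \bmod 2) = 1$, i.e. $\delta_1 \in \{1,3\}$, while "reducible" residues $\{0,1,2,3\}$ correspond to $\delta_1 \in \{0,2\}$ — this matches the theorem perfectly. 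So the main obstacle, and the step I would be most careful about, is getting these base-$4$ digit correspondences exactly right (which residue of $m+1$ mod $8$ corresponds to which value of $\delta_1 \bmod 2$, and verifying the shift $n \mapsto 2n-1$ together with "strip lowest base-$4$ digit of $m+1$" composes correctly); once that dictionary is pinned down, the induction is immediate. The alternative phrasing about "$2^{2k}$-th binary digits being $0$" then follows since a base-$4$ digit at position $j$ in $m+1$ lying in $\{0,2\}$ is equivalent to the bit at binary position $2j$ being $0$, i.e. the $2^{2j}$-bit vanishes for all $j \geq 1$.
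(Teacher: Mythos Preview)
Your approach is correct and is essentially the same as the paper's: both argue by induction using the recurrence of Lemma~\ref{alternative}, observing that the residues $m\equiv 3,4,5,6\pmod 8$ force $c_m=0$ while the remaining residues reduce to $c_{m'}$ with $m'+1=\lfloor(m+1)/4\rfloor$, i.e.\ stripping the lowest base-$4$ digit (equivalently the two lowest binary digits) of $m+1$. The paper phrases this reduction compactly as $c_n=c_{\lfloor(n-3)/4\rfloor}$ and works with the binary expansion of $n+1$ directly, but your base-$4$ bookkeeping and the paper's binary bookkeeping are the same argument; after your mid-proof self-correction (the evil block corresponds to $\delta_1\in\{1,3\}$, not $\{2,3\}$) the dictionary lines up exactly.
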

\begin{proof}
We will use induction on length of binary expansion of $n+1$. Our theorem holds for $n<7$. Let us suppose that $n+1$ has binary expansion of length $k \geq 3$.

From recurrence relations presented in Lemma \ref{alternative} we get that if $n=8k+r$ with $r\in\{3,4,5,6\}$ then $n \notin \cal{A}$. Let us note that this condition is equivalent to the fact that $n+1$ has digit $1$ at position $2^2$ in its binary expansion.

Next, if $n=8k+r$ with $r\in\{0,1,2,7\}$ then using Lemma \ref{alternative} one more time
we have $c_n = c_{\left[\frac{n-3}{4}\right]}$. It turns out that binary expansion of $\left[\frac{n-3}{4}\right]+1 = \left[\frac{n+1}{4}\right]$ is just binary expansion of $n+1$ with truncated two last digits. In order to finish the proof we use induction hypothesis to get the statement of our theorem.
\end{proof}

In order to enumerate elements of the set $\mathcal{A}$ we define a sequence $(b_n)_{n \in\N}$ of those non-negative integers which has only digits $0$ and $2$ in base $4$ expansion. Number $b_n$ can be computed in the following way: replace each digit $1$ by the digit $2$ in the unique binary expansion of $n$,
 the string obtained in this way is an expansion of $b_n$ in base $4$. Therefore we can write the following recurrence relations
\begin{equation*}
  b_{0}    =  0, \quad b_{2n} = 4b_{n}, \quad b_{2n+1} = 4b_n + 2.
\end{equation*}
Let $(a_n)_{n\in\N}$ be the increasing sequence of elements of $\mathcal{A}$, then we have the following relation between sequences $(a_n)_{n\in\N}$ and $(b_n)_{n\in\N}$.

\begin{lem}\label{abrelation}
For each integer $k\geq 0$ the following equality holds
\begin{equation*}
a_{4k+r} = 4b_k+r  \text{ for } r=-1,0,1,2,
\end{equation*}
provided that $4k+r \geq 0$.
\end{lem}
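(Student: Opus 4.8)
The plan is to extract from Theorem~\ref{0characterization} an explicit description of $\mathcal{A}$ as an increasing concatenation of length-four blocks of consecutive integers indexed by $k$, and then to obtain the formula by counting which position in the enumeration each element receives.

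First I would restate Theorem~\ref{0characterization} arithmetically: $n\in\mathcal{A}$ if and only if $n+1=4t+s$ with $s\in\{0,1,2,3\}$ and $t\in\N$ all of whose base-$4$ digits lie in $\{0,2\}$. By the definition of the sequence $(b_k)_{k\in\N}$, the integers $t$ of this kind are precisely $b_0,b_1,b_2,\ldots$, so, writing $r=s-1\in\{-1,0,1,2\}$,
\[
\mathcal{A}=\bigl\{\,4b_k+r\;:\;k\in\N,\ r\in\{-1,0,1,2\}\,\bigr\}\cap\N_{+}.
\]
For each fixed $k$ the four integers $4b_k-1,\,4b_k,\,4b_k+1,\,4b_k+2$ form a block $B_k$ of consecutive integers. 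Since $b_k\geq b_1=2$ for $k\geq1$ we have $4b_k-1\geq 7$, so $B_k\subset\N_{+}$ for every $k\geq1$, whereas $B_0=\{-1,0,1,2\}$ loses exactly the entry $-1$ (the entry $0$ being carried by the term $a_0=0$, with which the statement's proviso is consistent).

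Second I would verify that the blocks are pairwise disjoint and occur in increasing order, i.e. $\max B_k=4b_k+2<4b_{k+1}-1=\min B_{k+1}$ for all $k$; this is equivalent to $b_{k+1}-b_k\geq 1$, i.e. to strict monotonicity of $(b_k)$, which follows by an immediate induction from $b_{2n}=4b_n$ and $b_{2n+1}=4b_n+2$ (alternatively, the digit description exhibits $k\mapsto b_k$ as order-preserving). Hence $\mathcal{A}$, together with $a_0=0$, is the ordered concatenation $\{0,1,2\}\cup B_1\cup B_2\cup\cdots$. A one-line induction on $k$ then identifies the range of indices used by the first blocks: after $\{0,1,2\}$ (indices $0,1,2$) and $B_1,\ldots,B_{k-1}$ one has used $3+4(k-1)=4k-1$ terms, so $B_k$ occupies the indices $4k-1,4k,4k+1,4k+2$. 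As the element $4b_k+r$ is the $r$-th one above $\min B_k=4b_k-1$, this yields $a_{4k+r}=4b_k+r$ for $r\in\{-1,0,1,2\}$ when $k\geq1$, and $a_{4\cdot 0+r}=4b_0+r$ for $r\in\{0,1,2\}$; the only missing instance is $(k,r)=(0,-1)$, which is precisely the one ruled out by the hypothesis $4k+r\geq0$.

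The whole thing is a counting argument resting on Theorem~\ref{0characterization}; the only point that needs care is the slightly ``short'' block at $k=0$, together with carrying out the index bookkeeping for $B_k$ by an honest induction rather than by hand-waving, but I do not expect any genuine obstacle there.
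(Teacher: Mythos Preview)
Your proof is correct and follows essentially the same approach as the paper's: both arguments extract from Theorem~\ref{0characterization} the description of $\mathcal{A}$ as the union of four-term blocks $\{4b_k-1,4b_k,4b_k+1,4b_k+2\}$ and then read off the enumeration. Your version is somewhat more explicit in verifying strict monotonicity of $(b_k)$ and in handling the truncated block at $k=0$, whereas the paper simply writes out the shifted sequence $4b_0,4b_0+1,\ldots$ and subtracts one; the content is the same.
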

\begin{proof}
From Theorem \ref{0characterization} we know that the sequence $(a_n)_{n\in\N}$ consists of those positive integers $k$ such that expansion  of $k+1$ in base $4$ has only digits $0$ and $2$, except at the last position, where we can have arbitrary digit.
 Let us look at the following sequence
 \begin{equation*}
4b_0,4b_{0}+1,4b_0+2,4b_{0}+3,  \quad 4b_{1},4b_{1}+1,4b_{1}+2,4b_{1}+3, \quad 4b_2,4b_2+1,4b_2+2,4b_2+3,\;\ldots
 \end{equation*}
It consists of those integers which have only digits $0$ and $2$ in base $4$, except at the last position, where we can have arbitrary digit.
Therefore, to get description of ${\bf a}=(a_n)_{n\in\N}$ we just have to subtract one from each element of this sequence. We want $a_1$ to be the first positive element and thus
\begin{equation*}
a_1 = 4b_{0}+1,\quad a_2 = 4b_0+2,\quad a_3 = 4b_1-1,\quad  a_4 = 4b_1, \quad a_5 = 4b_1+1,\; \ldots .
 \end{equation*}
Clearly, the above relations can be rewritten in the following form
\begin{equation*}
a_{4k+r} = 4b_k+r  \text{ for } r=-1,0,1,2.
\end{equation*}
Our lemma is proved.
\end{proof}

We use the above lemma in order to get 2-regularity of the sequence ${\bf a}$. More precisely, we have the following:

\begin{thm}\label{1characterization}
Let $G(X)=\sum_{n=1}^{\infty}c_{n}X^{n}$ be the inverse power series of the Prouhet-Thue-Morse power series $F(X)$ over $\mathbb{F}_{2}$. Then $c_{m}=1$ if and only if $m=a_{n}$, where $n>0$, $a_{0}=0, a_{1}=1, a_{2}=2, a_{3}=7$ and for $n\geq 1$ we have the following recurrence relations
\begin{equation*}
\begin{array}{lcl}
  a_{4n}   & = & a_{4n-1}+1 \\
  a_{4n+1} & = & a_{4n-1}+2 \\
  a_{4n+2} & = & a_{4n-1}+3 \\
  a_{8n+3} & = & a_{8n}+7   \\
  a_{8n+7} & = & 4a_{4n+3}+3.
\end{array}
\end{equation*}
\end{thm}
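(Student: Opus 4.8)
The plan is to obtain all five recurrences as direct consequences of Lemma~\ref{abrelation} combined with the defining relations $b_{2k}=4b_{k}$ and $b_{2k+1}=4b_{k}+2$ of the auxiliary sequence $(b_{n})_{n\in\N}$. The equivalence ``$c_{m}=1$ if and only if $m=a_{n}$'' together with the values $a_{0}=0$, $a_{1}=1$, $a_{2}=2$, $a_{3}=7$ is nothing but Theorem~\ref{0characterization} and the definition of $(a_{n})_{n\in\N}$ as the increasing enumeration of $\cal{A}$ (with the convention $a_{0}=0$). So the whole task reduces to the arithmetic of shifting indices so as to bring each argument into the shape $4k+r$ with $r\in\{-1,0,1,2\}$ covered by Lemma~\ref{abrelation}.

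First I would dispose of the three relations for $a_{4n}$, $a_{4n+1}$, $a_{4n+2}$: applying Lemma~\ref{abrelation} with $k=n$ and successively $r=-1,0,1,2$ gives $a_{4n-1}=4b_{n}-1$, $a_{4n}=4b_{n}$, $a_{4n+1}=4b_{n}+1$, $a_{4n+2}=4b_{n}+2$, and subtracting the first from the others yields $a_{4n}=a_{4n-1}+1$, $a_{4n+1}=a_{4n-1}+2$, $a_{4n+2}=a_{4n-1}+3$; for $n\ge 1$ all indices are nonnegative, so the hypothesis of the lemma is met.

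For the fourth relation I would write $8n+3=4(2n+1)-1$ and apply Lemma~\ref{abrelation} with $k=2n+1$, $r=-1$, so that $a_{8n+3}=4b_{2n+1}-1=4(4b_{n}+2)-1=16b_{n}+7$; likewise $a_{8n}=4b_{2n}=16b_{n}$, whence $a_{8n+3}=a_{8n}+7$. For the last relation, the key observation is $8n+7=4\cdot 2(n+1)-1$, which gives $a_{8n+7}=4b_{2(n+1)}-1=16b_{n+1}-1$ by Lemma~\ref{abrelation} (with $k=2(n+1)$, $r=-1$) and the relation $b_{2k}=4b_{k}$, while $4n+3=4(n+1)-1$ gives $a_{4n+3}=4b_{n+1}-1$; consequently $4a_{4n+3}+3=16b_{n+1}-1=a_{8n+7}$.

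There is no genuine obstacle here: the mathematical content is already contained in Theorem~\ref{0characterization} and Lemma~\ref{abrelation}, and the only thing demanding attention is the bookkeeping of the index substitutions (and checking the finitely many small values $n\le 1$ together with the boundary condition ``provided $4k+r\ge 0$'' in Lemma~\ref{abrelation}), which one verifies by hand.
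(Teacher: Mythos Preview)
Your proposal is correct and follows essentially the same approach as the paper: both proofs derive all five recurrences directly from Lemma~\ref{abrelation} combined with the defining relations $b_{2k}=4b_{k}$ and $b_{2k+1}=4b_{k}+2$, with only cosmetic differences in how the index arithmetic is organized. Your explicit attention to the boundary condition $4k+r\ge 0$ in Lemma~\ref{abrelation} is a nice touch the paper leaves implicit.
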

\begin{proof}
From Lemma \ref{abrelation} and recurrence relations satisfied by the sequence $(b_n)_{n\in\N}$ we get that
\begin{equation*}
a_{4n+r} = 4b_n+r = (4b_{n}-1)+(r+1) = a_{4n-1}+(r+1) \text{ for } r=0,1,2.
\end{equation*}
Moreover
\begin{equation*}
a_{8n+3} = 4b_{2n+1}-1 = 16b_{n}+7 = 4b_{2n}+7 = a_{8n-1}+8 = a_{8n}+7,
\end{equation*}
and
\begin{equation*}
a_{8n+7} = 4b_{2n+2}-1 = 16b_{n+1}-1 = 4(a_{4n+3}+1)-1 = 4a_{4n+3}+3.
\end{equation*}
So we get desired relations and our theorem is proved.
\end{proof}

From the definition of the sequence $(c_{n})_{n\in\N}$ we easily deduce that it consists of blocks of 0's of length divisible by 4 and blocks of 1's of the exact length 4. Thus, a natural question arises about the behavior of the sequence $(a_{n+1}-a_{n})_{n\in\N}$. In particular, we are interested in its limit points and set of values.

We have the following:

\begin{thm}\label{dif}
We have
\begin{equation*}
 \{a_{n}-a_{n-1}:\;n\in\N_{+}\}=\left\{\frac{1}{3}(4^{m}-1):\;m\in\N\right\}.
\end{equation*}
Moreover,
$$
a_{n}-a_{n-1}=1 \Longleftrightarrow n\equiv 0, 1, 2\pmod{4}, n\in\N_{+},
$$
and for $m\geq 2$ we have
$$
a_{n}-a_{n-1}=\frac{1}{3}(4^{m}-1) \Longleftrightarrow n=2^{m+1}k+2^{m}-1\quad\mbox{for each}\quad k\in\N.
$$
\end{thm}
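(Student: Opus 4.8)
The plan is to organize the proof by the residue of the index $n$ modulo $4$. When $n\equiv 0,1,2\pmod 4$ the recurrence relations of Theorem~\ref{1characterization} give the difference on the nose: writing $n=4k$, $n=4k+1$ or $n=4k+2$ with $k\ge 1$ one reads off $a_{4k}-a_{4k-1}=1$, $a_{4k+1}-a_{4k}=(a_{4k-1}+2)-(a_{4k-1}+1)=1$ and $a_{4k+2}-a_{4k+1}=1$, while the two exceptional small cases $a_1-a_0$ and $a_2-a_1$ equal $1$ by inspection. So the only differences requiring real work are those of the form $a_{4k+3}-a_{4k+2}$ with $k\ge 0$, i.e. those occurring at indices $n\equiv 3\pmod 4$.

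For these I would pass through the auxiliary sequence ${\bf b}$ via Lemma~\ref{abrelation}. Since $a_{4k+3}=a_{4(k+1)-1}=4b_{k+1}-1$ and $a_{4k+2}=4b_k+2$, we get
\[
a_{4k+3}-a_{4k+2}=4(b_{k+1}-b_k)-3,
\]
so the whole problem reduces to computing the increments of ${\bf b}$. Recall $b_k$ is obtained by reading the binary digits of $k$ in base $4$ and doubling, i.e. $b_k=2\sum_i\eps_i(k)4^i$ when $k=\sum_i\eps_i(k)2^i$. Let $m=\nu_2(k+1)$ be the number of trailing $1$'s in the binary expansion of $k$; passing from $k$ to $k+1$ clears those $m$ ones and creates one $1$ one position higher, so
\[
b_{k+1}-b_k=2\Bigl(4^m-\tfrac{4^m-1}{3}\Bigr)=\tfrac13(4^{m+1}+2)
\]
(this also drops out of an immediate induction from $b_{2n}=4b_n$, $b_{2n+1}=4b_n+2$ after splitting on the parity of $k$). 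Substituting back yields the key identity
\[
a_{4k+3}-a_{4k+2}=\tfrac43(4^{m+1}+2)-3=\tfrac13(4^{m+2}-1),\qquad m=\nu_2(k+1).
\]

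From this identity everything else is bookkeeping. Because $m\ge 0$, a difference at an index $\equiv 3\pmod 4$ always has value $\frac13(4^M-1)$ with $M=m+2\ge 2$, and conversely every such value is attained: choosing $k$ with $\nu_2(k+1)=M-2$ (for instance $k=2^{M-2}-1$) realizes it. Together with $a_n-a_{n-1}=1=\frac13(4^1-1)$ for $n\equiv 0,1,2\pmod 4$, this identifies the set of all differences with the set displayed in the theorem, and it simultaneously shows $a_n-a_{n-1}=1$ holds exactly for $n\not\equiv 3\pmod 4$ (for $n\equiv 3$ the difference is $\ge 5$). Finally, for fixed $M\ge 2$ and $n=4k+3$, the condition $a_n-a_{n-1}=\frac13(4^M-1)$ is equivalent to $\nu_2(k+1)=M-2$, i.e. to $k=2^{M-1}\ell+2^{M-2}-1$ for some $\ell\in\N$, which rewrites as $n=4k+3=2^{M+1}\ell+2^M-1$; note such $n$ is automatically $\equiv 3\pmod 4$ once $M\ge 2$, and no index $n\not\equiv 3\pmod 4$ can produce this difference since there it equals $1$. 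The only real obstacle is getting the carry computation for $b_{k+1}-b_k$ and the ensuing index arithmetic exactly right; once Lemma~\ref{abrelation} is available there is no conceptual difficulty.
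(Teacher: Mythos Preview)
Your proof is correct and follows essentially the same route as the paper: both split on the residue of $n$ modulo $4$, dispose of $n\equiv 0,1,2\pmod 4$ immediately, and for $n\equiv 3\pmod 4$ pass through Lemma~\ref{abrelation} to reduce to the increment $b_{k+1}-b_k$ (the paper writes $n=4k-1$ and computes $b_k-b_{k-1}$ via $q=\nu_2(k)$, you write $n=4k+3$ and use $m=\nu_2(k+1)$, which is the same computation up to reindexing). Your arithmetic is in fact cleaner---the paper's line $a_{4k-1}-a_{4k-2}=4(b_k-b_{k-1})-1$ should read $-3$, as you have it.
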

\begin{proof}
 From Lemma \ref{abrelation} we know that $a_{4k+r} = 4b_k+r$ for $r=-1,0,1,2$, therefore when
 $n \equiv 0,1,2 \pmod 4$ we have $a_n-a_{n-1} = 1$.

 Let us take $n = 4k-1$ and observe that $a_{4k-1}-a_{4k-2} = 4(b_{k}-b_{k-1})-1$.
 Let binary expansion of $k$ be $k = \overline{d_sd_{s-1}\ldots d_0}$, and let $q$ be the least integer such that
 $d_q \neq 0$. The binary expansion of $k-1$ is $k-1 = \overline{d_sd_{s-1}\ldots d_{q+1}011\ldots 1}$. We are interested in computing $b_k-b_{k-1}$. We have
 \begin{equation*}
  b_k-b_{k-1} = \overline{d_s0d_{s-1}0\ldots d_00}-\overline{d_s0d_{s-1}0\ldots d_{q+1}0001010\ldots 10} = 2^{2q+1} - \sum_{i=0}^{q-1} 2^{2i+1}
 \end{equation*}
and as a consequence we get
\begin{equation*}
a_n-a_{n-1} = 4(2^{2q+1} - \sum_{i=0}^{q-1} 2^{2i+1}) -1 = 8\left(4^q - \frac{4^q-1}{3}\right) -1 = \frac{4^{q+2}-1}{3}.
\end{equation*}
We put $m = q-2$ to get $n = 4k-1 = 4(2^q+2^{q+1}l)-1 = 2^{m+1}l+2^m-1$.
Our theorem follows.
\end{proof}

As a immediate consequence of Theorem \ref{dif} we get

\begin{cor}
For each $m\in\N_{+}$ in the sequence ${\bf c}$ there are infinitely many strings of $0$'s of length $\frac{1}{3}(4^{m}-1)$. In particular, the sequence ${\bf c}$ contains an arbitrary long strings of consecutive $0$'s.
\end{cor}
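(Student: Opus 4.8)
The plan is to read the corollary off directly from Theorem~\ref{dif}, so I expect the proof to be very short. First I would recall that $(a_{n})_{n\in\N}$ is the increasing enumeration of $\cal{A}=\{m\in\N:\;c_{m}=1\}$; consequently, for every $n\geq 1$ we have $c_{j}=0$ for all $j$ with $a_{n-1}<j<a_{n}$, so that each gap $a_{n}-a_{n-1}$ produces a block of precisely $a_{n}-a_{n-1}-1$ consecutive $0$'s in the sequence ${\bf c}$.

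Next I would fix $m\in\N_{+}$ and apply the last equivalence in Theorem~\ref{dif} with parameter $m+1$ (which is $\geq 2$): for every $k\in\N$, setting $n=2^{m+2}k+2^{m+1}-1$ yields $a_{n}-a_{n-1}=\frac{1}{3}(4^{m+1}-1)$, hence a run of $\frac{1}{3}(4^{m+1}-1)-1=\frac{4}{3}(4^{m}-1)$ consecutive $0$'s. Distinct values of $k$ give distinct (and arbitrarily large) indices $n$, so there are infinitely many such runs. Since $\frac{4}{3}(4^{m}-1)\geq\frac{1}{3}(4^{m}-1)$, each of them contains a string of $0$'s of length exactly $\frac{1}{3}(4^{m}-1)$, which is the first assertion. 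The final sentence of the corollary then follows at once, because $\frac{1}{3}(4^{m}-1)\to\infty$ as $m\to\infty$.

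The computation is pure bookkeeping and the only thing to be careful about — the sole, very mild obstacle — is the index shift: one must \emph{not} try to use the gap $\frac{1}{3}(4^{m}-1)$ itself, since the run of $0$'s it creates has length only $\frac{4}{3}(4^{m-1}-1)$, which is strictly less than $\frac{1}{3}(4^{m}-1)$. Replacing $m$ by $m+1$ repairs this at no cost.
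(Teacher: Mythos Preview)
Your argument is correct and is exactly the intended route: the paper records the corollary as an immediate consequence of Theorem~\ref{dif} without writing out a proof, and your derivation from the gap description $a_{n}-a_{n-1}=\frac{1}{3}(4^{m}-1)$ is precisely that immediate consequence. Your observation about the index shift (using parameter $m+1$ so that the resulting run of $0$'s has length $\frac{4}{3}(4^{m}-1)\geq\frac{1}{3}(4^{m}-1)$, rather than $\frac{4}{3}(4^{m-1}-1)<\frac{1}{3}(4^{m}-1)$) is a genuine and necessary care point that the paper's terse statement glosses over.
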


\begin{rem}
{\rm Let us consider the infinite word
$$
{\bf m}=c_{0}c_{1}c_{2}\ldots.
$$
We observe that the above corollary shows that ${\bf m}$ contains arbitrary large powers. This is in strong contrast with the property of the Prouhet-Thue-Morse word ${\bf T}=t_{0}t_{1}t_{2}\ldots$ which does not contain any power of a finite word with exponent $\geq 3$ as a subword.}
\end{rem}

In the next result we present an interesting connection between the sequence ${\bf a}$ and the PTM sequence.

\begin{thm}
Let ${\bf t}=(t_{n})_{n\in\N}$ be the PTM sequence, i.e., $t_{n}=s_{2}(n)\pmod{2}$, and let us consider the sequence ${\bf a}=(a_{n})_{n\in \N}$ defined in Theorem \ref{1characterization}. Then the following identity holds:
\begin{equation*}
t_{a_{n}}=\frac{1}{2}(t_{n}+t_{n+1})+\frac{1}{2}(-1)^{n}(t_{n}-t_{n+1}).
\end{equation*}
In particular $t_{a_{2n}}=t_{n}$ and $t_{a_{2n+1}}=t_{n+1}$ for $n\in\N$.
\end{thm}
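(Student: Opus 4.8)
The plan is to push everything through Lemma~\ref{abrelation} and reduce the claim to elementary binary digit counting. First I would observe that the right-hand side is merely a compact encoding of a parity case distinction: it equals $t_n$ when $n$ is even and $t_{n+1}$ when $n$ is odd. Hence it suffices to establish the two displayed ``in particular'' identities $t_{a_{2n}}=t_n$ and $t_{a_{2n+1}}=t_{n+1}$, and I would prove these by splitting $n$ modulo $4$, i.e.\ by verifying
$$
t_{a_{4k}}=t_{2k},\qquad t_{a_{4k+1}}=t_{2k+1},\qquad t_{a_{4k+2}}=t_{2k+1},\qquad t_{a_{4k+3}}=t_{2k+2}
$$
for all admissible $k\ge 0$.

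The key elementary input is the shape of $b_k$ in base $2$. If $k=\sum_i\epsilon_i 2^i$ with $\epsilon_i\in\{0,1\}$, then replacing each binary digit $1$ by a base-$4$ digit $2$ yields $b_k=\sum_i\epsilon_i 2^{2i+1}$, so $4b_k=\sum_i\epsilon_i 2^{2i+3}$ has all of its binary $1$'s in positions $\ge 3$ when $k\ge 1$ (and $4b_0=0$). In particular $s_2(4b_k)=s_2(k)$, and for $r\in\{0,1,2\}$ the addition $4b_k+r$ involves no carrying, so $s_2(4b_k+r)=s_2(k)+s_2(r)$. Using $a_{4k+r}=4b_k+r$ from Lemma~\ref{abrelation}, the relation $t_m=s_2(m)\bmod 2$, and $s_2(1)=s_2(2)=1$, this gives $t_{a_{4k}}=t_k$ and $t_{a_{4k+1}}=t_{a_{4k+2}}=1-t_k$; since $t_{2k}=t_k$ and $t_{2k+1}=1-t_k$, these are the first three required equalities.

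The remaining case $r=-1$ needs one extra observation. By Lemma~\ref{abrelation}, $a_{4k+3}=a_{4(k+1)-1}=4b_{k+1}-1$ for $k\ge 0$. Let $q=\min\{i:\epsilon_i=1\}$ for the binary expansion of $k+1\ge 1$; then the lowest $1$-bit of $4b_{k+1}$ lies in position $2q+3$, so subtracting $1$ turns positions $0,1,\dots,2q+2$ into $1$'s and clears position $2q+3$. Hence $s_2(4b_{k+1}-1)=s_2(4b_{k+1})-1+(2q+3)=s_2(k+1)+2q+2\equiv s_2(k+1)\pmod 2$, so $t_{a_{4k+3}}=t_{k+1}=t_{2k+2}$, which is the last required equality. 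Reading the four cases backwards into the factored expression $\frac{1}{2}(t_n+t_{n+1})+\frac{1}{2}(-1)^n(t_n-t_{n+1})$ finishes the proof; the initial values $a_0=0,a_1=1,a_2=2,a_3=7$ already fit these formulas, so no separate base case is needed.

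As for difficulty, there is essentially no real obstacle: the only spot requiring care is the digit-sum evaluation for $4b_{k+1}-1$, where the borrow changes $s_2$ by $2q+2$; the whole point is that this correction is always even, so the parity---and therefore $t$---is unaffected. An alternative and slightly more laborious route would be a direct induction on $n$ using the recurrences for ${\bf a}$ from Theorem~\ref{1characterization} together with $t_{2m}=t_m$ and $t_{2m+1}=1-t_m$, but the digit-counting argument above is shorter and more transparent.
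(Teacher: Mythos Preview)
Your proof is correct and follows essentially the same approach as the paper: both reduce the identity to the four cases $n\equiv 0,1,2,3\pmod 4$ via Lemma~\ref{abrelation}, use $s_2(b_k)=s_2(k)$ together with the absence of carries in $4b_k+r$ for $r\in\{0,1,2\}$, and handle the $r=-1$ case by locating the lowest $1$-bit of $4b_{k+1}$ and observing that subtracting $1$ changes $s_2$ by an even amount. The only cosmetic difference is that you make the connection to the closed-form right-hand side explicit at the start, whereas the paper leaves this implicit.
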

\begin{proof}
Let us take an arbitrary integer $k>0$ with binary expansion $k = \overline{d_sd_{s-1}\ldots d_0}$. From definition of the sequence $(b_n)_{n\in\N}$ we get that the binary expansion of $b_k$ takes the form $b_{k}=\overline{d_s0d_{s-1}0\ldots d_00}$. As a consequence we get $s_2(b_k) = s_2(k)$.

Now we are ready to prove our result.
As $a_{4k+r}=4b_{k}+r$ for $r=0,1,2$, we simply compute
\begin{align*}
s_2(a_{4k}) &= s_2(4b_{k}) = s_2(k), \\
s_2(a_{4k+1}) &= s_2(b_k) +1 = s_2(k)+1 = s_2(2k+1), \\
s_2(a_{4k+2}) &= s_2(b_k)+1 = s_2(k)+1 = s_2(2k+1).
\end{align*}

  When $n=4k+3$ then $a_{4k+3}= 4b_{k+1}-1$.
  Let the binary expansion of $k+1$ be $k+1 = \overline{d_sd_{s-1}\ldots d_0}$. We define $m\in\N$ as the minimal integer such that $d_m \neq 0$.
  The binary expansion of $4b_{k+1}$ takes the form $4b_{k+1} = \overline{d_s0d_{s-1}0\ldots d_0 000}$ (it has $2m+3$ ending zeros). When we subtract one from this number we change all ending zeros into ones, and change first one into zero, thus
  $s_2(a_{4k+3}) = s_2(4b_{k+1}-1) = s_2(k+1)-1+(2m+3) \equiv s_2(2k+1) \pmod 2$.
  Our theorem follows.

\end{proof}

\begin{thm}
We have the following equalities
\begin{equation*}
m:=\liminf_{n\rightarrow +\infty}\frac{a_{n}}{n^2}=\frac{1}{6},\quad M:=\limsup_{n\rightarrow +\infty}\frac{a_{n}}{n^2}=\frac{1}{2}.
\end{equation*}
Moreover, the set
$$
\cal{A}:=\left\{\frac{a_{n}}{n^2}:\;n\in\N_{+}\right\}
$$
is dense in $\left[\frac 16,\frac{1}{2}\right]$.
\end{thm}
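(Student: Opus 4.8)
The plan is to reduce the whole statement to the arithmetic of the ratio $B(k)/k^{2}$, where $B(k)$ denotes the integer obtained by reading the binary digits of $k$ in base~$4$ (so $B(2n)=4B(n)$, $B(2n+1)=4B(n)+1$, and comparing with the recurrences for $b_{n}$ gives $b_{k}=2B(k)$), and then to analyze that ratio by interpolating the digit--reinterpretation map by a monotone function on $[1,2)$. By Lemma~\ref{abrelation}, for $n=4k+r$ with $k\ge 1$ and $r\in\{-1,0,1,2\}$ we have
\[
\frac{a_{n}}{n^{2}}=\frac{4b_{k}+r}{(4k+r)^{2}}=\frac{8B(k)+r}{(4k+r)^{2}}=\frac{B(k)}{2k^{2}}+O\!\Big(\frac1k\Big),
\]
the error being uniform once $B(k)\le k^{2}$ is known (Step~2), and $k=k(n)\to\infty$ running finitely often over all of $\N_{+}$. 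Hence $(a_{n}/n^{2})_{n}$ and $(B(k)/(2k^{2}))_{k}$ have the same set of limit points; in particular their $\liminf$'s and $\limsup$'s agree and density transfers, so it suffices to show $\liminf_{k}B(k)/k^{2}=\tfrac13$, $\limsup_{k}B(k)/k^{2}=1$, and that $\{B(k)/k^{2}:k\ge1\}$ is dense in $[\tfrac13,1]$.

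For the elementary bounds, write $k=\sum_{i\in S}2^{i}$ with $S=\{i_{1}<\dots<i_{m}\}$ and use the identity $k^{2}=B(k)+2\sum_{a<b}2^{\,i_{a}+i_{b}}$. From $\sum_{a=1}^{b-1}2^{i_{a}}\le 2^{i_{b-1}+1}-1\le 2^{i_{b}}-1$ we get $\sum_{a<b}2^{\,i_{a}+i_{b}}\le\sum_{b=2}^{m}(4^{i_{b}}-2^{i_{b}})\le B(k)-k$, whence
\[
k^{2}\le 3B(k)-2k<3B(k)\qquad\text{and}\qquad B(k)\le k^{2},
\]
so $\tfrac13<B(k)/k^{2}\le 1$ for all $k\ge 1$. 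Taking $k=2^{j}$ (where $B(k)=4^{j}=k^{2}$) gives $\limsup=1$, and $k=2^{j}-1$ (where $B(k)=\tfrac{4^{j}-1}{3}$ while $k^{2}=4^{j}-2^{j+1}+1$) gives $\liminf=\tfrac13$; combined with the first paragraph this yields $m=\tfrac16$ and $M=\tfrac12$.

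For the density I would interpolate: for $x\in[1,2)$ with binary expansion $x=1+\sum_{j\ge1}\delta_{j}2^{-j}$ (the terminating one when $x$ is dyadic) set $y(x)=1+\sum_{j\ge1}\delta_{j}4^{-j}$ and $V(x)=y(x)/x^{2}$. Then $y$ is increasing and right-continuous, so $V$ is right-continuous, continuous at every non-dyadic point, and all its jumps are upward; also $V(1)=1$, $\lim_{x\to2^{-}}V(x)=\tfrac13$, and $V(k/2^{\lfloor\log_{2}k\rfloor})=B(k)/k^{2}$, so $\{V(q):q\in[1,2)\ \text{dyadic}\}=\{B(k)/k^{2}:k\ge1\}$. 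Fix $v\in(\tfrac13,1)$ and put $x_{v}=\inf\{x\in[1,2):V(x)<v\}$; this set is nonempty (it contains a left neighbourhood of $2$) and, by right-continuity and $V(1)=1$, $x_{v}\in(1,2)$. Using $V(x)\ge v$ for $x<x_{v}$ and right-continuity one checks $V(x_{v}^{-})\ge v\ge V(x_{v})$; since any jump of $V$ at $x_{v}$ is upward, this forces $V(x_{v}^{-})=V(x_{v})=v$, i.e. $V$ is continuous at $x_{v}$ with value $v$, so $x_{v}$ is non-dyadic and dyadics $q_{j}\to x_{v}$ satisfy $V(q_{j})\to v$. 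Hence $\{B(k)/k^{2}\}$ is dense in $(\tfrac13,1)$, and transporting back through the first paragraph (covering the endpoints $\tfrac16,\tfrac12$ via $k=2^{j}-1$ and $k=2^{j}$) shows $\mathcal{A}$ is dense in $[\tfrac16,\tfrac12]$.

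The main obstacle is the density step, and the subtlety worth emphasizing is that one must \emph{not} try to solve $B(k)/k^{2}=v$ exactly: the dyadic values of $V$ skip the (small) jump intervals, and the binary-to-base-$4$ digit map is singular, so no direct computation pins down the target. The right idea is that $V$, having only upward jumps, cannot jump over any value in $(\tfrac13,1)$ and hence attains each such value at a continuity point, after which plain continuity and the density of dyadics finish the job. The computations in Step~2 are routine but are precisely what confines the intermediate-value argument to $[\tfrac13,1]$.
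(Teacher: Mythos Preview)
Your proof is correct, and the reduction to $B(k)/k^{2}$ together with the identity $k^{2}=B(k)+2\sum_{a<b}2^{\,i_{a}+i_{b}}$ is a clean way to obtain the bounds $\tfrac13(k^{2}+2k)\le B(k)\le k^{2}$; the paper does the equivalent step for $b_{k}=2B(k)$ by a straight induction on $k$, arriving at the same inequalities and the same extremal subsequences $k=2^{j}$ and $k=2^{j}-1$.

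The density argument is where the two proofs genuinely diverge. The paper fixes a target $q\in(\tfrac23,2)$ and builds an explicit sequence $u_{0}=1$, $u_{n}\in\{2u_{n-1},2u_{n-1}+1\}$ by a greedy rule (append the digit $1$ exactly when $b_{2u_{n-1}+1}\ge q(2u_{n-1}+1)^{2}$), checks that $b_{u_{n}}/u_{n}^{2}$ is decreasing with limit $\ge q$, and then excludes $\lim>q$ by showing the set of steps where a $0$ is appended must be infinite. Your interpolation $V(x)=y(x)/x^{2}$ on $[1,2)$ encodes the same one-digit-at-a-time choice as a single real-variable function and replaces the case analysis with a topological observation: since $V$ is right-continuous with only upward jumps, it cannot skip any value while descending from $V(1)=1$ to $V(2^{-})=\tfrac13$, so each $v\in(\tfrac13,1)$ is hit at a continuity (hence non-dyadic) point, and density of dyadics finishes. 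Your route is more conceptual and immediately generalizes to any digit-reinterpretation $\text{base }p\mapsto\text{base }p'$ with $p'>p$; the paper's route is more constructive and makes the approximating $k$'s explicit. Both ultimately rest on the same mechanism, namely that appending $0$ versus $1$ pushes the ratio in opposite, controllable directions.
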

\begin{proof}
 We know that $a_{4k+r} = 4b_k+r$ for $r=-1,0,1,2$ and thus
 \begin{equation*}
  \liminf_{n \to \infty} \frac{a_n}{n^2} = \liminf_{k \to \infty} \frac{b_k}{4k^2}
  \quad \text{ and } \quad \limsup_{n \to \infty} \frac{a_n}{n^2} = \limsup_{k \to \infty} \frac{b_k}{4k^2}.
 \end{equation*}
We thus see that in order to get the result it is enough to prove that $\displaystyle\limsup_{k \to \infty} \frac{b_k}{k^2} = 2$, $\displaystyle\liminf_{k \to \infty} \frac{b_k}{k^2} = \frac 23$
 and that the set $$
\cal{B}:=\left\{\frac{b_{k}}{k^2}:\;k\in\N_{+}\right\}
$$
is dense in $\left[ \frac 23 , 2 \right]$.

Let us prove that $2n^2 \geq b_n \geq \frac{2}{3}(n^2 +2n)$ for $n \geq 1$. We use induction on $n$. Our inequalities are satisfied for $n=1$.
We consider two cases depending on parity of $n$. If $n=2k$ then we have
\begin{align*}
b_{n}=b_{2k} &= 4b_k \geq 4 \cdot \frac{2}{3}(k^2+2k) \geq \frac{2}{3}(4k^2+4k)=\frac{2}{3}(n^2+2n),\\
b_{n}= 4b_k \leq 8k^2=2n^2.
\end{align*}
In the case when $n=2k+1$ we have
\begin{align*}
b_{n}=b_{2k+1} &= 4b_k+2 \geq 4 \cdot \frac{2}{3}(k^2+2k) +2 \\
               &\geq \frac{2}{3} ( 4k^2+8k+3) \geq \frac{2}{3} ((2k+1)^2 + 2(2k+1))=\frac{2}{3} (n^2 + 2n),\\
b_{n}=b_{2k+1} &= 4b_k+2 \leq 8k^2+2 \leq 8k^2+8k+2 = 2(2k+1)^2=2n^2.
\end{align*}
Our inequalities are proved. As a consequence we obtain the inequalities
$$\frac{2}{3}\leq \displaystyle\liminf_{k \to \infty} \frac{b_k}{k^2}\quad\mbox{and}\quad\displaystyle\limsup_{k \to \infty} \frac{b_k}{k^2} \leq 2.$$
On the other hand $b_{2^k} = 2 \cdot 4^k$ and
$b_{2^k-1} = 2 \cdot \frac{4^{k}-1}{3}$ for each integer $k$ and thus the extremal values of the sequence $(b_{k}/k^2)_{k\in\N_{+}}$ are attained and corresponding values of $m$ and $M$ are as in the statement of the theorem.

Now we prove that the set $\cal{B}$ is dense in $(\frac{2}{3}, 2)$. Let us fix number $q \in (\frac{2}{3}, 2)$ and define the sequence $(u_n)_{n \geq 0}$ as follows: $u_0=1$ and for $n\geq 1$ by
the following relations
\begin{equation*}
 u_n = \begin{cases}
	  2u_{n-1}+1 & \text{ when } \quad q(2u_{n-1}+1)^2 \leq b_{2u_{n-1}+1}, \\
	  2u_{n-1} \quad & \text{ otherwise. }
       \end{cases}
\end{equation*}
We claim that $\displaystyle\lim_{n\to\infty}\frac{b_{u_n}}{u_n^2}=q$. Let us define the set
\begin{equation*}
 E:= \{n \in \N :\;u_n \equiv 0 \pmod 2 \}.
\end{equation*}
We will prove that $E$ is infinite.

First of all we prove that $E$ is non-empty. To the contrary let us suppose that $E = \emptyset$. As a consequence we get that $u_n = 2^{n+1}-1$ and $\displaystyle\lim_{n\to\infty}\frac{b_{u_n}}{u_n^2}  \frac{2}{3} < q$. However, this is a contradiction because $b_{u_n} \geq q u_n^2$ from the definition of $u_n$.

Now, let us suppose that $E$ is non-empty but finite. Thus, one can define the number $n_0 = \max E$. The number $u_{n_0}$ is even and from the definition of our sequence we get $u_{n_0} = 2u_{n_0-1}$
and $b_{2u_{n_0-1}+1} < q(2u_{n_0-1}+1)^2$. The inequality is clearly equivalent to the inequality $b_{u_{n_0}}+2 < q(u_{n_0}+1)^2$.
We have that $u_{n} = 2^{n-n_0} u_{n_0} + 2^{n-n_0}-1$ for
$n \geq n_0$ and thus
\begin{equation*}
\lim_{n\to +\infty} \frac{b_{u_n}}{u_n^2} =\lim_{n\to +\infty} \frac{(4^{n-n_0}b_{u_{n_0}}+2 \cdot \frac{4^{n-n_0}-1}{3})}{(2^{n-n_0} u_{n_0} + 2^{n-n_0}-1)^2} =\frac{(b_{u_{n_0}}+\frac 23)}{(u_{n_0}+1)^2}<q.
\end{equation*}
However, this is a contradiction because for all $n$ we have $b_{u_n} \geq (u_n)^2 q$.

The sequence $\frac{b_{u_n}}{u_n^2}$ is decreasing and thus has a limit $r$. From $b_{u_n}\geq qu_n^2$ we have
$r \geq q$. The set $E$ is infinite and as a consequence we get that for infinitely many $n$ the
inequality $b_{u_{n}}+2 < q(u_{n}+1)^2$ holds. Thus $r = \displaystyle\lim_{n \to \infty} \frac{(b_{u_n}+2)}{(u_{n}+1)^2} \leq q$ and this implies the equality $r=q$. Our theorem follows.

\end{proof}

\section{Characteristic sequence of 0's in the sequence {\bf c}}

Let
$$
\mathcal D: = \{ m\in\N_{+}:\; c_m = 0 \}
$$
and let ${\bf d}=(d_{n})_{n\in\N_{+}}$ be the increasing sequence satisfying the equality $\mathcal{D}=\{d_{n}:\;n\in\N_{+}\}$.
In this section we will prove that the sequence ${\bf d}$ is not $k$-regular for any $k$.
In order to prove this theorem we will need some lemmas concerning auxiliary sequence
$${\bf z}=(z_n)_{n\in\N_{+}},$$
where $z_n = (\frac{d_{4n}+1}4-n) \pmod 2$.

\begin{lem}
 Let us consider the sequence ${\bf z}=(z_{n})_{n\in\N}$.
 Then $z_n = 1$ if and only if $n$ can be written in the form $n = \sum_{i=0}^s 2^{n_i-1}(2^{n_i}-1)$ for some increasing
 sequence $1 < n_0 < \ldots < n_s$.
\end{lem}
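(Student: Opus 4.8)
The plan is to translate the definition of $\mathbf z$ into a condition on the base‑$4$ digits of $n$ and then recognise the resulting set of $n$ as a digital sum. First I would make the set $\mathcal D$ explicit. From the recurrences of Lemma~\ref{alternative}, for every $m\ge 1$ the four terms $c_{4m-1}=c_{4m}=c_{4m+1}=c_{4m+2}$ coincide, and by Theorem~\ref{0characterization} their common value equals $1$ precisely when every base‑$4$ digit of $m$ lies in $\{0,2\}$. Call such $m$ \emph{even-type} and every other positive integer \emph{odd-type}, and let $m_1<m_2<\cdots$ be the increasing list of odd-type integers. Then $\mathcal D$ is the increasing union, over $\ell\ge 1$, of the blocks $\{4m_\ell-1,4m_\ell,4m_\ell+1,4m_\ell+2\}$. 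Feeding the index $4n$ into this and simplifying $\tfrac{d_{4n}+1}{4}-n$ modulo $2$ shows that $z_n=1$ if and only if the number of even-type integers in $[1,m_{n+1}]$ is even; equivalently, $z_n=1$ exactly when the $(n+1)$-st odd-type integer lies in a gap between two consecutive even-type integers whose lower endpoint carries an even index.

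Next I would describe the gaps between consecutive even-type integers $0=b_0<b_1<b_2<\cdots$. Since $b_k=\sum_i(2\varepsilon_i)4^i$ when $k=\sum_i\varepsilon_i2^i$, analysing the binary carry in passing from $k$ to $k+1$ gives $b_{k+1}-b_k=\tfrac13\bigl(4^{\,\nu_2(k+1)+1}+2\bigr)$, where $\nu_2$ is the $2$-adic valuation; as no even-type integer lies strictly between $b_j$ and $b_{j+1}$, the $j$-th gap contains exactly $r_j:=\tfrac13\bigl(4^{\,\nu_2(j+1)+1}-1\bigr)$ odd-type integers, so $r_j=1$ for all even $j$. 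Writing $R_j:=r_0+\cdots+r_j$, the indices $N$ for which $m_N$ lies in the $j$-th gap are exactly $R_{j-1}+1,\dots,R_j$; for even $j$ this is just $R_j$. Together with the first step this gives
$$
z_n=1\iff n=R_{2i-1}\ \text{ for some }i\ge 1 .
$$

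Finally I would evaluate $R_{2i-1}$. Splitting the indices $0,\dots,2i-1$ into the $i$ even ones (each contributing $r_j=1$) and the $i$ odd ones, and reindexing the latter by $\ell=1,\dots,i$ through $\nu_2(2\ell)=\nu_2(\ell)+1$, one gets $R_{2i-1}=\sum_{\ell=1}^{i}f(\ell)$ with $f(\ell)=\tfrac13\bigl(4^{\,\nu_2(\ell)+2}+2\bigr)$. Because $\nu_2(P+\ell)=\nu_2(\ell)$ whenever $1\le\ell\le 2^{b}$ and $2^{\,b+1}\mid P$, the contribution to this sum of a $1$-bit of $i$ at position $b$ is $G(b):=\sum_{\ell=1}^{2^{b}}f(\ell)$, independent of the higher bits of $i$; hence $R_{2i-1}=\sum_{b:\,\text{bit }b\text{ of }i\text{ is }1}G(b)$. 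A one-line induction on the recursion $G(b)=2G(b-1)+4^{\,b+1}$, $G(0)=6$, yields $G(b)=2^{\,b+1}(2^{\,b+2}-1)=2^{\,k-1}(2^{\,k}-1)$ with $k=b+2$. Since the $1$-bit positions of $i$ range over all nonempty finite subsets of $\N$ as $i$ ranges over $\N_{\ge 1}$, it follows that $\{n:z_n=1\}$ is exactly the set of sums $\sum_{k\in S}2^{\,k-1}(2^{\,k}-1)$ over nonempty finite $S\subseteq\{2,3,4,\dots\}$, which is the set in the statement.

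The crux is the first step: reading off the $4$-block structure of $\mathbf c$ from Lemma~\ref{alternative}, matching the value-$1$ blocks with the even-type integers via Theorem~\ref{0characterization}, and keeping careful enough track of indices to reduce $z_n$ to the parity of an even-type count. Once that is done the rest is a routine, if slightly fiddly, digital-sum calculation, the only subtle point being the carry analysis that yields the formula for $b_{k+1}-b_k$.
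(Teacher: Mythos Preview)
Your argument is correct and arrives at the same characterization, but by a genuinely different path from the paper.  The paper identifies $u_n=(d_{4n}+1)/4$ with the $n$-th positive integer having a digit $1$ or $3$ in base $4$ (your odd-type numbers), then proves a three-branch recurrence for $u_n$ according to the leading base-$4$ digit of $u_n$ and inducts on the base-$4$ length: if $u_n\in[4^{m-1},4^m)$ and $u_n\not\equiv n\pmod 2$, the recurrence forces the top digit to be $2$, peels it off, and the induction hypothesis supplies the rest of the sum.

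You instead work from the \emph{bottom}: you encode the parity of $u_n-n$ as the parity of the number of even-type integers below $u_n$, which is just the index $j$ of the gap $(b_j,b_{j+1})$ containing $u_n$; since the even-indexed gaps have length~$1$, the condition $z_n=1$ becomes $n=R_{2i-1}$.  The evaluation of these partial sums via $\nu_2$ and the binary expansion of $i$ then explains structurally why each binary bit of $i$ contributes an independent term $2^{k-1}(2^k-1)$.  The paper's induction is shorter to write down; your approach is more explicit about \emph{where} the summands $2^{k-1}(2^k-1)$ come from and links them directly to the carry structure in $b_{k+1}-b_k$.  Both are self-contained given Lemma~\ref{alternative} and Theorem~\ref{0characterization}; just be aware that the index bookkeeping in ``feeding the index $4n$ into this'' tacitly uses the $0$-based indexing $d_0=3$ (so that $(d_{4n}+1)/4=m_{n+1}$), which you should make explicit.
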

\begin{proof}
 We know from Theorem \ref{0characterization} that $m \in \mathcal{D}$ if and only if at least one of the digits in the expansion $m+1 = \sum_{i=0}^t e_i4^i$ of $m+1$ in base~$4$ is equal to $1$ or $3$. Let us put $u_{n}=(d_{4n}+1)/4$. We thus see that the sequence $(u_n)_{n \in\N}$ consists of those positive  integers which have at least one digit equal to $1$ or $3$ in their base-$4$ expansion.

 Let us prove that the elements of the sequence $(u_n)_{n \geq 0}$ satisfy the following recurrence formula:
 \begin{equation*}
  u_0 = 1, u_1 = 3,
  \begin{cases}
    u_{4^n-2^n + i} = 4^n+i & \text{ for } i=0,1,\ldots,4^n-1, n \geq 1, \\
    u_{2\cdot 4^n-2^n +i} = 2 \cdot 4^n +  u_i & \text{ for } i=0,1,\ldots,4^{n}-2^{n}-1, n \geq 1, \\
    u_{3\cdot 4^n-2^{n+1} +i} = 3\cdot 4^n + i & \text{ for } i=0,1,\ldots,4^n-1, n \geq 1. \\
  \end{cases}
 \end{equation*}
In order to prove the above relations we use induction. Moreover, we also prove that $|\{n | u_n < 4^{m} \}| = 4^{m}-2^m$. Let us suppose that our relations holds for $n < m$. The only elements of $(u_n)_{n\in\N}$ which are less than $4$ are $1$ and $3$ and the statement is true for $m=1$.
 Let us consider elements of the sequence $(u_n)_{n\in\N}$ of length $m+1$. When such number starts with digit $1$ or $3$ then the rest of digits can be
 arbitrary. Moreover when the expansion of the number $u_{n}$ starts with digit $2$ then the rest of digits has to form an element of the sequence $(u_n)_{n\in\N}$ of length at most $m$. Therefore
 there are $2 \cdot 4^m + 2(4^m-2^m) = 4^{m+1}-2^{m+1}$ numbers in the sequence $(u_n)_{n\in\N}$ of length at most $m+1$.
 Moreover it is clear from our reasoning that the above recurrence holds.

 Let us prove by induction on $m$ that if $u_n \neq n \pmod 2$ and $u_n \in [4^{m-1}, 4^m)$ then we can write
 $n = \sum_{i=0}^s 2^{n_i-1}(2^{n_i}-1)$ for some increasing sequence $1 < n_0 < \ldots < n_s = m$.
 If $m=1$ then $u_0 = 1 \not\equiv 0 \pmod 2$ and $n=0$ corresponds to the empty sum.
 If $m > 1$ then from our recurrence relation we can see that the only possibility is that
 $n = 2 \cdot 4^{m-1}-2^{m-1} + i$, where $i \leq 4^{m-1}-2^{m-1}-1$ and $u_i \not\equiv i \pmod 2$. From induction hypothesis we get
 $ i = \sum_{j=0}^{s} 2^{n_i-1}(2^{n_i}-1)$ for some increasing sequence $1 < n_0 < \ldots < n_s < m$. Summing up we get
 \begin{equation*}
 n = 2 \cdot 4^{m-1} - 2^{m-1} + i = 2^{m-1}(2^m-1)+i = 2^{m-1}(2^m-1) + \sum_{j=0}^{s} 2^{n_i-1}(2^{n_i}-1),
 \end{equation*}
and thus if we define $n_{s+1} = m$ we arrive at desired equality.
\end{proof}

\begin{lem}\label{lembad}
 Let $m \geq 2$ for each $n \in \left[2^{2m}, \frac 43 2^{2m}\right]$ we have $z_n = 0$.
\end{lem}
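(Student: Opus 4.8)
The plan is to read off the claim directly from the explicit description of the set $\{n\in\N_{+}:z_{n}=1\}$ furnished by the preceding lemma, together with two elementary size estimates. Recall that lemma says $z_{n}=1$ precisely when $n=\sum_{i=0}^{s}2^{n_{i}-1}(2^{n_{i}}-1)$ for some increasing sequence $1<n_{0}<\dots<n_{s}$. Writing $f(k):=2^{k-1}(2^{k}-1)=2^{2k-1}-2^{k-1}$, the admissible summands are the strictly increasing numbers $f(2)=6,\ f(3)=28,\ f(4)=120,\dots$, and every $n$ with $z_{n}=1$ is a sum of finitely many distinct $f(k)$ with $k\ge 2$.

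I would then dichotomize according to the top index $n_{s}$ appearing in such a representation. In the first case, $n_{s}\le m$, so that $\{n_{0},\dots,n_{s}\}\subseteq\{2,\dots,m\}$ and, all $f(k)$ being positive,
\begin{equation*}
n\le\sum_{k=2}^{m}f(k)=\sum_{k=2}^{m}\bigl(2^{2k-1}-2^{k-1}\bigr)=\frac{2\cdot 4^{m}-3\cdot 2^{m}-2}{3}<\frac{2}{3}\cdot 4^{m}<4^{m}=2^{2m},
\end{equation*}
whence $n<2^{2m}$. In the second case, $n_{s}\ge m+1$, and since $f$ is increasing and the summands are positive,
\begin{equation*}
n\ge f(n_{s})\ge f(m+1)=2^{2m+1}-2^{m}=\frac{4}{3}\cdot 2^{2m}+2^{m}\Bigl(\tfrac{2}{3}\cdot 2^{m}-1\Bigr)>\frac{4}{3}\cdot 2^{2m},
\end{equation*}
the last inequality holding because $\tfrac{2}{3}\cdot 2^{m}-1>0$ for $m\ge 2$. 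In either case $n\notin\bigl[2^{2m},\tfrac{4}{3}\cdot 2^{2m}\bigr]$, so no $n$ in this interval admits a representation of the required shape, i.e. $z_{n}=0$, which is the assertion.

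The computations are routine (two finite geometric sums); the only point deserving a moment's care is to check that the two regions $\{n<2^{2m}\}$ and $\{n>\tfrac{4}{3}\cdot 2^{2m}\}$ between them exhaust all $n$ with $z_{n}=1$ in the relevant range, i.e. that there is a genuine gap between the largest sum whose top index is $m$ and the smallest sum whose top index is $m+1$. This is exactly what the inequalities $\tfrac{2}{3}\cdot 4^{m}<2^{2m}$ and $f(m+1)>\tfrac{4}{3}\cdot 2^{2m}$ record, and beyond this bookkeeping there is no substantive obstacle.
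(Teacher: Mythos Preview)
Your proof is correct and follows essentially the same route as the paper's: both use the preceding characterization of $\{n:z_n=1\}$ and show that if $n_s\le m$ then $n<2^{2m}$, while if $n_s\ge m+1$ then $n>\tfrac{4}{3}\cdot 2^{2m}$. The paper phrases this as a direct contradiction (assume $n$ in the interval with $z_n=1$ and bound both ways), and uses the cruder estimate $\sum_{k=2}^{m}f(k)<\sum_{j=0}^{2m-1}2^{j}<2^{2m}$ rather than evaluating the geometric sums exactly, but the argument is the same.
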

\begin{proof}
Let $n\in \left[2^{2m}, \frac 43 2^{2m}\right]$ and suppose that $z_n = 1$. In particular
\begin{equation*}
n  =  \sum_{i=0}^s 2^{n_i-1}(2^{n_i}-1)
\end{equation*}
for some increasing sequence $n_{0},\ldots,n_{s}$. We then have
\begin{equation*}
2^{2m+1}-2^{m+1}=2^{2m+1} \left(1-\frac 1{2^m} \right) > 2^{2m+1} \cdot \frac 23 \geq n \geq 2^{n_s-1}(2^{n_s}-1)
 \end{equation*}
and thus $n_s \leq m$. On the other hand
 \begin{align*}
2^{2m} \leq n = \sum_{i=0}^s 2^{n_i-1}(2^{n_i}-1) &\leq (2^3-2)+\ldots+(2^{2m-1}-2^m)\\
                                                  &<1+2+\ldots+2^{2m-1} < 2^{2m}
 \end{align*}
and we get a contradiction.
\end{proof}

\begin{lem}\label{1divisor}
 For each positive integer $m\geq 2$ there exists a positive integer $n$ such that $m|n$ and $z_n = 1$.
\end{lem}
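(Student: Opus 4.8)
The plan is to reduce the statement to a short pigeonhole argument on partial sums. Writing $f(k):=2^{k-1}(2^k-1)$ for $k\geq 2$, the characterization of $\{n:z_n=1\}$ obtained above says that $z_n=1$ precisely when $n=\sum_{i=0}^{s}f(n_i)$ for some strictly increasing sequence of integers $1<n_0<\dots<n_s$. Hence, for a fixed $m\geq 2$, it is enough to exhibit a nonempty finite set $S\subseteq\{2,3,\dots\}$ with $m\mid\sum_{k\in S}f(k)$: then $n:=\sum_{k\in S}f(k)$ is a positive multiple of $m$ satisfying $z_n=1$. In fact it suffices to take $S$ to be a block of consecutive integers, since any such block is a valid increasing sequence with smallest element $\geq 2$.

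To produce such a block I would consider the $m+1$ partial sums
$$
P_t := f(2)+f(3)+\dots+f(t+1),\qquad t=0,1,\dots,m,
$$
with the convention $P_0:=0$. Among these $m+1$ integers two must be congruent modulo $m$, say $P_a\equiv P_b\pmod m$ with $0\leq a<b\leq m$. Then
$$
n:=P_b-P_a=\sum_{k=a+2}^{b+1}f(k)
$$
(read as $\sum_{k=2}^{b+1}f(k)$ when $a=0$) is divisible by $m$. It is positive because it is a sum over the nonempty set of consecutive integers $\{a+2,\dots,b+1\}\subseteq\{2,\dots,m+1\}$ and each $f(k)$ is positive, and this index set is an increasing sequence of integers all exceeding $1$. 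Applying the characterization lemma gives $z_n=1$, which proves the claim.

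I expect no serious obstacle here: the only points needing a moment's care are that the index block must start at an integer $\geq 2$ (guaranteed since $a+2\geq 2$) and that $n$ is genuinely positive (guaranteed since $a<b$ makes the block nonempty). One could equally phrase the argument by observing that if some $P_t$ with $t\geq 1$ is already divisible by $m$ we are done, while otherwise the $m$ values $P_1,\dots,P_m$ occupy the $m-1$ nonzero residue classes modulo $m$ and must collide; either way the conclusion is the same, and the construction also makes clear that one may take $n$ arbitrarily large if desired.
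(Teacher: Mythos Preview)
Your proof is correct. The pigeonhole argument on the partial sums $P_t=\sum_{k=2}^{t+1}f(k)$ is sound: two of the $m+1$ values $P_0,\dots,P_m$ coincide modulo $m$, the resulting block $\{a+2,\dots,b+1\}$ is nonempty, strictly increasing, and contained in $\{2,3,\dots\}$, so $n=P_b-P_a$ is a positive multiple of $m$ of the required shape, and the characterization lemma gives $z_n=1$.

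The paper argues differently. It factors $m=2^u(2v+1)$, picks $q=(u+1)\varphi(2v+1)+1$ so that $2^{q-1}\equiv 1\pmod{2v+1}$, and then takes the explicit sum
\[
n=\sum_{i=1}^{2v+1}2^{i(q-1)}\bigl(2^{i(q-1)+1}-1\bigr),
\]
whose exponents $i(q-1)+1$ form a valid increasing sequence starting at $q\geq 2$. Modulo $2v+1$ each summand reduces to $1\cdot(2-1)=1$, so the total is $2v+1\equiv 0$, while $2^u\mid n$ because the smallest $2$-adic valuation among the summands is $q-1\geq u+1$. Thus the paper's proof is constructive and relies on Euler's theorem, whereas yours is an existence argument that uses nothing beyond pigeonhole and would in fact work with any sequence of positive integers in place of $f(k)$. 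Your route is shorter and more elementary; the paper's has the advantage of producing an explicit $n$ with controlled size.
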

\begin{proof}
 Let us write $m = 2^u \cdot (2v+1)$. There exists $q = (u+1)\varphi(2v+1)+1$ such that $2^{q-1} \equiv 1 \pmod {2v+1}$ and $2^{q-1} > 2^u$.
 Now we can define
 \begin{equation*}
    n = \sum_{i=1}^{2v+1} 2^{i(q-1)}(2^{i(q-1)+1}-1) \equiv \sum_{i=1}^{2v+1} 1\cdot(2-1) \equiv 0 \pmod {2v+1},
 \end{equation*}
 which satisfies the desired conditions.

\end{proof}

\begin{lem}\label{2bad}
 The sequence ${\bf z}$ is not $2$-automatic.
\end{lem}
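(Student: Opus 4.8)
The plan is to argue by contradiction. Suppose ${\bf z}$ is $2$-automatic, so that its $2$-kernel is finite. The first step is to extract from this an eventual self-similarity along powers of $4$. For every $a\ge 0$ the sequence $\xi_a:=(z_{2^{2a}n})_{n\ge 0}$ is a member of the (finite) $2$-kernel, and $\xi_{a+1}$ is obtained from $\xi_a$ by the operation $\eta\mapsto(\eta(4n))_n$; since this operation maps the finite set $\{\xi_a:a\ge 0\}$ into itself, the orbit $\xi_0,\xi_1,\dots$ is eventually periodic, and there exist $A\ge 1$ and $\pi\ge 1$ with
\begin{equation*}
z_{4^{a+\pi}n}=z_{4^{a}n}\qquad\text{for all }a\ge A\text{ and all }n .
\end{equation*}

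The second step is to feed a well-chosen value $1$ into this relation. By the characterization of $\{n:z_n=1\}$ proved earlier in this section (or, equivalently, by the element produced in the proof of Lemma~\ref{1divisor} for the modulus $4^{A}$), the single-block integer $N:=4^{A}(2^{2A+1}-1)=2^{2A}(2^{2A+1}-1)$ satisfies $z_N=1$. Writing $N=4^{A}w$ with $w=2^{2A+1}-1$ and iterating the displayed relation with $n=w$ yields $z_{4^{A+j\pi}w}=z_N=1$ for every $j\ge 0$; in other words ${\bf z}$ equals $1$ at each of the positions $P_j:=2^{2(A+j\pi)}(2^{2A+1}-1)$.

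The third step is to collide these ones with the long blocks of zeros. The proof of Lemma~\ref{lembad} in fact establishes the sharper statement that $z_n=0$ for every $n\in\bigl[2^{2m},\,2^{2m+1}-2^{m+1}\bigr)$ and every $m\ge 2$ (the band $[2^{2m},\tfrac43 2^{2m}]$ sits comfortably inside this interval). Now $P_j=2^{2m_j+1}-2^{2(A+j\pi)}$ with $m_j:=2A+j\pi$, and a short estimate shows $2^{2m_j}\le P_j<2^{2m_j+1}-2^{m_j+1}$ as soon as $j\pi\ge 2$, while $m_j\ge 2$ for all $j\ge 1$. Hence for all large $j$ the sharper form of Lemma~\ref{lembad} gives $z_{P_j}=0$, contradicting $z_{P_j}=1$. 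Therefore ${\bf z}$ is not $2$-automatic.

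I expect the genuinely delicate point to be the compatibility underlying the third step. The kernel argument can only produce ones at positions differing from one another by a fixed power of $2$, so their leading binary digits are essentially frozen; and a $1$ of ${\bf z}$ necessarily begins with a run of several $1$'s (coming from the top block in its representation), so such a position can never lie in the thin band $[2^{2m},\tfrac43 2^{2m}]$. This is precisely why one must both sharpen Lemma~\ref{lembad} to the wide interval $[2^{2m},2^{2m+1}-2^{m+1})$ and choose the seed $N$ with exactly the right $2$-adic valuation, so that the forced orbit $4^{A+j\pi}w$ does enter that wide interval no matter what $\pi$ is. The remaining verifications are routine computations with powers of $2$.
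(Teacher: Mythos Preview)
Your argument is correct, but the route differs from the paper's. The paper invokes the pumping lemma for the $M$-state automaton on the single input $N=2^{M+1}(2^{M+2}-1)$ (for which $z_N=1$), obtaining some $q>0$ with $z\bigl(2^{q+M+1}(2^{M+2}-1)\bigr)=1$; it then reads off the $2$-adic valuation of any $n$ with $z_n=1$ from the characterisation $n=\sum_i 2^{n_i-1}(2^{n_i}-1)$ (namely $\nu_2(n)=n_0-1$), which forces $n_0=q+M+2$ and hence $n\ge 2^{n_0-1}(2^{n_0}-1)>2^{q+M+1}(2^{M+2}-1)$, a contradiction. In contrast, you use kernel finiteness to get eventual periodicity of $(z_{4^a n})_n$ in $a$, produce an infinite arithmetic-in-exponent family $P_j$ of ones, and then force $P_j$ into a zero band by sharpening Lemma~\ref{lembad} to the interval $[2^{2m},2^{2m+1}-2^{m+1})$. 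The sharpening is legitimate: the only place the proof of Lemma~\ref{lembad} uses the upper bound $n\le\tfrac43 2^{2m}$ is to deduce $n<2^{2m+1}-2^{m+1}$, after which $n_s\le m$ follows exactly as written. Your inequalities for $P_j$ are correct (the upper bound amounts to $j\pi>1$, the lower bound to $A\ge 0$), so the contradiction goes through. The paper's proof is shorter because the valuation observation replaces the geometric band argument entirely; your approach, on the other hand, avoids the pumping lemma and makes explicit how the structural zero gaps obstruct automaticity.
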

\begin{proof}
 Let us suppose that ${\bf z}$ is $2$-automatic. Let $A$ be the automaton with $M$ states which for given $n$ computes $z_n$. We know that $z(2^{M+1}(2^{M+2}-1)) = 1$.
 From pumping lemma \cite[Lemma 4.2.1]{AllSh2} we know that there exists $q>0$ for which we have $z(2^{q+M+1}(2^{M+2}-1)) = 1$. Thus we can write
 \begin{equation*}
  2^{q+M+1}(2^{M+2}-1) = \sum_{i=0}^s 2^{n_i-1}(2^{n_i}-1)\geq 2^{n_0-1}(2^{n_0}-1).
 \end{equation*}
But as $\nu_2(\sum_{i=0}^s 2^{n_i-1}(2^{n_i}-1)) = 2^{n_0-1}$ we have $n_0-1 = q+M+1$ and then
\begin{equation*}
2^{n_0-1}(2^{n_0}-1) > 2^{q+M+1}(2^{M+2}-1)
\end{equation*}
and we get a contradiction.

\end{proof}

\begin{lem}\label{restbad}
 Let $k>1$ be an integer not of the form $2^{m}$ for some positive integer $m$. Then the sequence ${\bf z}$ is not $k$-automatic.
\end{lem}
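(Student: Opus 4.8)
The plan is to argue by contradiction, combining the pumping lemma for regular languages with the forbidden windows supplied by Lemma~\ref{lembad}. Assume ${\bf z}$ is $k$-automatic, so that $L:=\{(n)_k:\;z_n=1\}$, the language of base-$k$ representations of the indices of $1$'s in ${\bf z}$, is regular; by Lemma~\ref{1divisor} the set $\{n:\;z_n=1\}$ is infinite, hence $L$ contains arbitrarily long words. Let $M$ be a pumping length for $L$ and pick $N$ with $z_N=1$ whose base-$k$ representation has at least $M$ digits. The pumping lemma \cite[Lemma 4.2.1]{AllSh2} gives a factorisation $(N)_k=xyw$ with $|y|\ge1$, $|xy|\le M$ and $xy^iw\in L$ for all $i\ge0$. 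Writing $p=|y|\ge1$, $r=|w|$, and summing the geometric progression that appears when extra copies of $y$ are inserted, one checks that $\operatorname{val}_k(xy^iw)=C(k^p)^i+D$ for fixed rationals
$$
C=\operatorname{val}_k(x)k^{r}+\frac{\operatorname{val}_k(y)k^{r}}{k^p-1}>0,\qquad D=\operatorname{val}_k(w)-\frac{\operatorname{val}_k(y)k^{r}}{k^p-1},
$$
where $C>0$ because the leading digit of $(N)_k$ is never erased. Hence there are rationals $C>0$, $D$ and an integer $p\ge1$ such that
$$
z_{N_i}=1\quad\text{for all }i\ge1,\qquad\text{where}\quad N_i:=C(k^p)^i+D\in\N_{+}.
$$

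Now the hypothesis enters: since $k$ is not a power of $2$, we have $\log_2 k\notin\Q$, so $\alpha:=\tfrac{p}{2}\log_2 k$ is irrational. As $N_i=C(k^p)^i\bigl(1+O(k^{-pi})\bigr)$,
$$
\tfrac12\log_2 N_i=\alpha i+\tfrac12\log_2 C+\varepsilon_i,\qquad \varepsilon_i\longrightarrow0.
$$
By Weyl's equidistribution theorem the sequence $\bigl(\alpha i+\tfrac12\log_2 C \bmod 1\bigr)_{i\ge1}$ meets every subinterval of positive length infinitely often; choosing such a subinterval compactly inside $\bigl(0,\tfrac12\log_2\tfrac43\bigr)$ and absorbing the vanishing perturbation $\varepsilon_i$, we obtain infinitely many $i$ with $\bigl\{\tfrac12\log_2 N_i\bigr\}\in\bigl(0,\tfrac12\log_2\tfrac43\bigr)$. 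For each such $i$, setting $m=\lfloor\tfrac12\log_2 N_i\rfloor$, this means $2^{2m}<N_i<\tfrac43\,2^{2m}$; moreover $m\to\infty$, so $m\ge2$ for all large $i$. Lemma~\ref{lembad} then forces $z_{N_i}=0$, contradicting $z_{N_i}=1$. Therefore ${\bf z}$ is not $k$-automatic.

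The conceptual crux is exactly the use of the irrationality of $\log_2 k$ — which is where the hypothesis ``$k$ not a power of $2$'' is indispensable — to push the pumped indices into the multiplicative windows $[2^{2m},\tfrac43 2^{2m}]$ of ratio $\tfrac43>1$ produced by Lemma~\ref{lembad}. The only fiddly-but-routine point is the bookkeeping in the pumping step: one must verify that inserting blocks turns $(N)_k$ into numbers of the exact shape $C(k^p)^i+D$ and that these are genuine positive integers tending to infinity, which is immediate once one writes $\operatorname{val}_k(xyw)=\operatorname{val}_k(x)k^{p+r}+\operatorname{val}_k(y)k^{r}+\operatorname{val}_k(w)$ and sums the geometric series. (Equivalently one may read the automaton least-significant-digit first and pump a block of low-order digits, exactly as in the proof of Lemma~\ref{2bad}; the pumped indices again form such a geometric progression, and the rest of the argument is unchanged.)
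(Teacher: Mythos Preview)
Your proof is correct and follows essentially the same route as the paper's: pump to obtain a geometric-type progression of indices with $z=1$, then use the irrationality of $\log_2 k$ and equidistribution to land one of these indices in a forbidden window $[2^{2m},\tfrac{4}{3}2^{2m}]$ from Lemma~\ref{lembad}. The only difference is cosmetic: the paper invokes Lemma~\ref{1divisor} to choose the initial $N$ divisible by $k^{M+1}$, so that pumping a block of low-order zeros yields the exact progression $k^{M+1+\nu q}L$ (no additive constant $D$, hence no $\varepsilon_i$ to absorb), whereas you pump an arbitrary block and carry the small perturbation through the equidistribution step---a variant you yourself note in your final paragraph.
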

\begin{proof}
 Let us suppose that ${\bf z}$ is $k$-automatic. Let $A$ be the automaton with $M$ states which for given $n$ computes $z_n$. From Lemma \ref{1divisor} we get
 that there is $n$ which is divisible by $k^{M+1}$ say $n=k^{M+1}L$ such that $z_n = 1$. From pumping lemma \cite{AllSh2} we know that there exists $q>0$ such that
 for all $\nu > 0$ we have $z(k^{M+1+\nu q} L) = 1$. Let us prove that we can find $\nu$ such that
 \begin{equation}\label{badcond}
  k^{M+1+\nu q} L \in \left[2^{2U}, \frac{4}{3} 2^{2U}\right] \text{ for some } U.
 \end{equation}
 Which is equivalent to
 \begin{equation*}
  \{ \log_4(k^{M+1+\nu q} L) \} < \log_4 \frac{4}{3}.
 \end{equation*}
We have
$\{ \log_4(k^{M+1+\nu q} L) \} = \{\nu q \log_4 k + \log_4(k^{M+1}L)\}$ as $\log_4(k^{M+1}L)$ is constant and $\log_4 k \notin \Q$ we know
that $\{ \log_4(k^{M+1+\nu q} L) \}$ is equidistributed in $[0,1]$. So we can find $\nu$ such that condition (\ref{badcond}) holds.
We arrive at a contradiction with Lemma \ref{lembad}.

\end{proof}

Gathering all the proved results together we get the following:

\begin{thm}
The sequence ${\bf d}$ is not $k$-regular for any $k\in\N_{\geq 2}$.
\end{thm}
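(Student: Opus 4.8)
The plan is to reduce the statement to the lemmas about ${\bf z}$ already proved. First observe that Lemmas~\ref{2bad} and~\ref{restbad} together show that ${\bf z}$ is not $k$-automatic for \emph{any} $k\in\N_{\geq 2}$: since a sequence is $2^{m}$-automatic if and only if it is $2$-automatic, Lemma~\ref{2bad} handles all bases that are powers of $2$, and Lemma~\ref{restbad} handles every other base. Hence it suffices to prove the implication ``if ${\bf d}$ is $k$-regular then ${\bf z}$ is $k$-automatic'', for then the assumption that ${\bf d}$ is $k$-regular for some $k$ immediately contradicts the previous sentence.

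So I would assume ${\bf d}=(d_{n})_{n}$ is $k$-regular for some $k\geq 2$ and chase closure properties. The class of $k$-regular sequences is stable under passing to an arithmetic subsequence $(u_{n})\mapsto (u_{an+b})$ (see \cite{AllSha0,AllSha} and \cite{AllSh2}), so $(d_{4n})_{n}$ is $k$-regular and hence so is $(d_{4n}+1)_{n}$. Recall that $d_{4n}+1$ is divisible by $4$ for every $n$ (this is exactly what makes ${\bf z}$ well defined), so $u_{n}:=(d_{4n}+1)/4$ is an integer sequence; writing $M\subseteq\Z^{\N}$ for the finitely generated $\Z$-module generated by the $k$-kernel of $(d_{4n}+1)_{n}$, the $k$-kernel of $(u_{n})_{n}$ lies in $\frac{1}{4}M$, which is finitely generated, and since $\Z$ is Noetherian the submodule it generates is finitely generated as well, so $(u_{n})_{n}$ is $k$-regular. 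Subtracting the trivially $k$-regular sequence $(n)_{n}$ yields that $(u_{n}-n)_{n}$ is $k$-regular. Finally, coordinatewise reduction modulo $2$ of a $k$-regular integer sequence is $k$-automatic: the $k$-kernel of $((u_{n}-n)\bmod 2)_{n}$ is the reduction modulo $2$ of the $k$-kernel of $(u_{n}-n)_{n}$, which lies in $M$, so it is contained in the image of $M$ under coordinatewise reduction modulo $2$ -- a finitely generated $\F_{2}$-module, hence a finite set; and a sequence with finite $k$-kernel is $k$-automatic. Therefore ${\bf z}$ is $k$-automatic, which is the contradiction we sought, and ${\bf d}$ is not $k$-regular for any $k$.

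The genuine obstacles have all been dealt with in Lemmas~\ref{lembad}--\ref{restbad}; at this last step there is essentially nothing to overcome beyond the bookkeeping above, the only facts worth isolating being closure of $k$-regularity under arithmetic subsequences, closure under the (integral) division by $4$, and the elementary principle that reducing a $k$-regular sequence modulo an integer produces a $k$-automatic sequence. If one prefers to avoid the division entirely, one can instead observe that $z_{n}$ depends on $n$ only through the pair $(d_{4n}\bmod 8,\ n\bmod 2)$ via a fixed finite function: $(d_{4n}\bmod 8)_{n}$ is $k$-automatic as the modulo-$8$ reduction of the $k$-regular sequence $(d_{4n})_{n}$, the sequence $(n\bmod 2)_{n}$ is periodic, and running the two associated automata in parallel produces an automaton for ${\bf z}$.
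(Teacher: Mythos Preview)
Your proposal is correct and follows the same route as the paper: assume ${\bf d}$ is $k$-regular, deduce that ${\bf z}$ is $k$-automatic, and invoke Lemmas~\ref{2bad} and~\ref{restbad} for the contradiction. The paper's proof is in fact just this, stated in two lines without justifying the implication ``${\bf d}$ $k$-regular $\Rightarrow$ ${\bf z}$ $k$-automatic''; you supply exactly that justification via the standard closure properties (arithmetic subsequence, addition of constants and of $(n)_n$, integral division by a constant, and reduction modulo $2$), together with the alternative observation that $z_n$ is a finite function of $(d_{4n}\bmod 8,\ n\bmod 2)$. One minor slip: when you argue that the $k$-kernel of $((u_n-n)\bmod 2)$ ``lies in $M$'', the correct module to name is the one generated by the $k$-kernel of $(u_n-n)$, not the $M$ you introduced earlier for $(d_{4n}+1)$; the conclusion is unaffected.
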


\begin{proof}
Suppose that the sequence ${\bf d}$ is $k$-regular for some $k$, then the sequence ${\bf z}$ has to be a $k$-automatic sequence for some $k\in\N_{\geq 2}$, which is impossible. However, we get a contradiction with Lemma \ref{2bad} when $k$ is power of 2 and Lemma \ref{restbad} in the opposite case.
\end{proof}

\section{Problems and conjectures}\label{sec4}

In this section we present some problems and conjectures connected with the sequence ${\bf c}$ and related sequences.

Let $p, n\in\N$ and consider the Hankel matrix related to the sequence ${\bf c}$
$$
h_{p,n}=[c_{p+i+j-1}]_{1\leq i, j\leq n}.
$$
Next, let us consider the sequence $H_{p,n}:=\op{det}h_{p,n}$. Recently, there has been a growing interest in studing the arithmetic properties of the sequences of Hankel determinants of automatic sequences (see for example \cite{AllP,Han} and references given there). This motivates us to state the following general

\begin{prob}
Characterize those $p\in\N$ such that the sequence $(H_{p,n})_{n\in\N}$ is $k$-automatic for some $k\in\N_{\geq 2}$.
\end{prob}

Based on numerical computations we believe that the following is true:

\begin{conj}
We have the following properties
\begin{enumerate}
\item For each $p, n\in\N$ we have $H_{p,n}\in\{-1,0,1\}$.

\item We have
\begin{equation*}
H_{0,n}=\begin{cases}\begin{array}{cl}
                       1,  & n=\frac{1}{3}(4^{m}-1)\quad\text{ for some }\quad m\geq 2 ;\\
                       -1, & n\in\{\frac{2}{3}(4^{m}-1)+1, \frac{2}{3}(4^m-1)+1, \frac{4}{3}(4^m-1)+2\}\;\text{ for some }\; m\geq 1 ; \\
                       0,  & \text{ otherwise }  .
                     \end{array}
\end{cases}
\end{equation*}

\item We have
\begin{equation*}
H_{n,n}=\begin{cases}%\begin{array}{cl}
                       1, & n=\sum_{i=1}^{m}4^{k_{i}},\quad\text{where}\quad k_{i}<k_{i+1}; \\
                       0, & \text{ otherwise }.
                     %\end{array}
\end{cases}
\end{equation*}
\end{enumerate}
\end{conj}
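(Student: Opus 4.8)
The plan is to exploit the self-similarity encoded in the $4$-Mahler functional equation (\ref{functionalequation}) together with the block structure of ${\bf c}$ furnished by Lemma \ref{alternative}, in order to reduce a Hankel determinant of size $\approx 4n$ to one of size $\approx n$. Recall from Lemma \ref{alternative} that the entries of ${\bf c}$ come in runs: for every $n\geq 1$ we have $c_{8n-1}=c_{8n}=c_{8n+1}=c_{8n+2}=c_{2n-1}$ and $c_{8n+3}=\cdots=c_{8n+6}=0$. Thus, read in windows of eight, the sequence is a block of four equal entries (a subsampled copy of ${\bf c}$) followed by a block of four zeros. This is exactly the feature that forces Hankel determinants of ${\bf c}$ to collapse, and it is the engine of the whole argument.

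First I would establish a contraction lemma for the matrices $h_{p,n}=[c_{p+i+j-1}]_{1\le i,j\le n}$. Since along each anti-diagonal the index $p+i+j-1$ runs through consecutive integers, the block structure makes many adjacent entries equal. Subtracting each row from the next, $R_{i}\mapsto R_{i}-R_{i+1}$, and likewise for columns, kills every difference $c_{m}-c_{m+1}$ whose two indices lie in a common run, producing a very sparse difference matrix. Performing a block-elimination (Schur complement) computation on the surviving entries, I expect the nonzero part to reassemble, up to an explicit sign and a permutation, into a Hankel matrix of the contracted sequence $(c_{2n-1})_{n\ge 1}$, which the functional equation (\ref{functionalequation}) identifies with ${\bf c}$ again. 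Iterating, one obtains a recurrence of the shape $H_{p,4n+r}=\pm H_{p',n'}$, with $p'$, $n'$ and the sign depending only on $p\bmod 8$ and on $r$. Part (1), the bound $H_{p,n}\in\{-1,0,1\}$, then follows by induction on the number of base-$4$ digits of $n$: the recurrence closes up on a finite set of residue classes, and each step multiplies the determinant by $0$ or by $\pm 1$.

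For parts (2) and (3) I would specialize the recurrence. Setting $p=0$ and unwinding, $H_{0,n}$ should vanish except at the ``jump'' sizes produced by the self-similar step, and these are precisely the base-$4$ repunits $n=\tfrac13(4^{m}-1)=1+4+\cdots+4^{m-1}$ together with the companion indices listed in (2); the same induction simultaneously records the sign. Equivalently, one may compute the Jacobi continued fraction of $C(X)$ (passing to an H-fraction where blocks degenerate) and invoke the classical identity expressing $H_{0,n}$ as a product of partial numerators: the lacunary nonzero pattern then reflects the large square blocks of the Pad\'e table of $C$, whose corners sit exactly at the stated indices. For (3) I would instead set $p=n$, so that the window slides one step along the anti-diagonal at each increment of $n$; tracking the same contraction along the diagonal $p=n$ should give a nonzero value exactly when the base-$4$ expansion of $n$ uses only the digits $0$ and $1$, that is $n=\sum_{i}4^{k_{i}}$, matching (3).

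The main obstacle is the boundary of the contraction. The four-term runs of Lemma \ref{alternative} tile the bulk of the Hankel window perfectly, but the top-left and bottom-right corners of an arbitrary $n\times n$ window are generally not block-aligned, and these partial blocks carry the entire dependence on $p\bmod 8$ and pin down the signs; organizing the resulting case analysis over residues modulo $8$ (and possibly modulo a higher power of $2$) so that the recurrence genuinely closes is where the real work lies. A secondary difficulty is that part (1) demands the reduction be uniform in $p$, not merely along the two slices $p=0$ and $p=n$ needed for (2) and (3). I note that working over $\F_2$ one can access $H_{p,n}\bmod 2$ more cheaply, since the Hankel-determinant array of an automatic sequence reduced modulo $2$ is itself amenable to Christol's theorem; but pinning down the exact integer value in $\{-1,0,1\}$, and in particular the signs, requires the integral contraction above and cannot be read off from the reduction modulo $2$ alone.
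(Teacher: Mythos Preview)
The statement you are attempting to prove is a \emph{conjecture} in the paper; the authors explicitly present it as such, supported only by numerical computations, and give no proof whatsoever. There is therefore nothing in the paper to compare your argument against.

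As for the proposal itself, it is not a proof but a strategic outline, and you acknowledge as much: the core step---the ``contraction lemma'' asserting that $H_{p,4n+r}=\pm H_{p',n'}$ for some explicit $p',n'$ and sign depending only on residues---is never actually proved, only hoped for (``I expect the nonzero part to reassemble\ldots''). This is the entire content of the conjecture, and until that recurrence is written down with all boundary cases handled and verified, nothing has been established. The analogy with the Allouche--Peyri\`ere--Wen--Wen treatment of the Thue--Morse Hankel determinants is reasonable and the block structure from Lemma~\ref{alternative} is indeed the natural starting point, but in that work the reduction is a genuine computation spanning many pages of case analysis, not a one-line consequence of self-similarity. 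Your own final paragraph correctly identifies the obstacle: the misaligned corner blocks carry all the information about $p\bmod 8$ and the sign, and you have not shown that the resulting case analysis closes. In particular, for part~(1) you need the recurrence to be uniform in $p$, and you give no evidence that the orbit of $(p\bmod 8,\,r)$ under your putative contraction is finite, nor that the multiplicative factor is always in $\{-1,0,1\}$ rather than merely $\pm 1$ in the nondegenerate cases. The continued-fraction alternative you mention for part~(2) is likewise only a restatement of the problem: computing the H-fraction of $C(X)$ and locating the degenerate steps is equivalent to computing the $H_{0,n}$ directly.
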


\begin{rem}
{\rm Let us observe that $H_{n,n}=1$ if and only if $n$ is an element of the Moser-de Bruijn sequence \cite{Bru, Mos}.}
\end{rem}

\begin{prob}
Let $m$ be a non-zero integer and let $F_{1}(X)$ ($F_{-1}(X)$ respectively) be generating function for the PTM sequence (for the iPTM sequence respectively) treated as an element of $\mathbb{F}_{2}[[X]]$. Next, let us define
$$
F_{m}(X)=\begin{cases}\begin{array}{cl}
                        F_{1}^{(m)}(X)   & \mbox{for}\quad m\geq 1 \\
                        F_{-1}^{(-m)}(X) & \mbox{for}\quad m\leq -1
                      \end{array}
\end{cases},
$$
where as usual $F^{(m)}(X)=F\circ F\circ \ldots \circ F$ is composition of $m$ copies of $F$. We write
$$
F_{m}(X)=\sum_{n=0}^{\infty}C_{m,n}X^{n}
$$
and ask about arithmetic properties of the sequence ${\bf C}_{m}=(C_{m,n})_{n\in\N}$.
\end{prob}

Finally, in the light of results obtained in this paper it is natural to state the following:

\begin{prob}
Let $p$ be a prime number and consider the series $F_{p}(X)=\sum_{n=0}^{\infty}s_{p}(n)\pmod{p}X^{n}\in\mathbb{F}_{p}[[X]]$. Study the sequence ${\bf c}_{p}=(c_{p,n})_{n\in \N}$, where $c_{p,n}$ is $n$-th coefficient in expansion of the power series $G_{p}(X)$ satisfying the identity $G_{p}(F_{p}(X))=F_{p}(G_{p}(X))=X$.

In particular, let $u\in\{0,1,\ldots,p-1\}$ and put
\begin{equation*}
\cal{A}_{p,u}:=\{n:\;c_{p,n}=u\}.
\end{equation*}
Characterize these $p$'s and $u\in\{0,1,\ldots,p-1\}$ for which the increasing sequence of elements of the set $\cal{A}_{u,p}$ is $k$-regular for some $k$.
\end{prob}

\bigskip

\noindent {\bf Acknowledgments}
We express our gratitude to the anonymous referee for a careful reading of the manuscript and many valuable suggestions and corrections made.

\bigskip

 \bigskip

\noindent  Maciej Gawron and Maciej Ulas, Jagiellonian University, Faculty of Mathematics and Computer Science, Institute of Mathematics, {\L}ojasiewicza 6, 30 - 348 Krak\'{o}w, Poland\\
e-mail:\;{\tt \{maciej.ulas, maciej.gawron\}@uj.edu.pl}

 \end{document}